\documentclass[11pt]{article}

\usepackage[margin=1in]{geometry}
\usepackage{amsmath,amssymb,amsfonts,amsthm,mathtools}
\usepackage{bm}
\usepackage{enumitem}
\usepackage{booktabs}
\usepackage{microtype}
\usepackage{hyperref}
\usepackage{placeins}
\usepackage[nameinlink,capitalize]{cleveref}
\usepackage[authoryear,round]{natbib}

\hypersetup{colorlinks=true,linkcolor=blue,citecolor=blue,urlcolor=blue}

\newtheorem{theorem}{Theorem}[section]
\newtheorem{proposition}[theorem]{Proposition}
\newtheorem{lemma}[theorem]{Lemma}
\newtheorem{corollary}[theorem]{Corollary}
\newtheorem{assumption}[theorem]{Assumption}
\newtheorem{remark}[theorem]{Remark}

\crefname{assumption}{Assumption}{Assumptions}
\Crefname{assumption}{Assumption}{Assumptions}
\crefname{proposition}{Proposition}{Propositions}
\Crefname{proposition}{Proposition}{Propositions}
\crefname{theorem}{Theorem}{Theorems}
\Crefname{theorem}{Theorem}{Theorems}
\crefname{corollary}{Corollary}{Corollaries}
\Crefname{corollary}{Corollary}{Corollaries}
\crefname{lemma}{Lemma}{Lemmas}
\Crefname{lemma}{Lemma}{Lemmas}
\crefname{remark}{Remark}{Remarks}
\Crefname{remark}{Remark}{Remarks}

\newcommand{\E}{\mathbb{E}}

\title{Weighted Nuclear Elastic Net Estimation of (Near-) Low-Rank Drift Matrices in Ornstein-Uhlenbeck Processes}
\author{Dmytro Marushkevych\thanks{School of Mathematics and Statistics, UNSW Sydney. Email: \texttt{d.marushkevych@unsw.edu.au}} \and Francisco Pina\thanks{Department of Mathematics, University of Luxembourg. Email: \texttt{francisco.pina@uni.lu}} \and Mark Podolskij\thanks{Department of Mathematics, University of Luxembourg. Email: \texttt{mark.podolskij@uni.lu}}}
\date{\today}

\begin{document}
\maketitle

\begin{abstract}
We study estimation of the drift matrix in a continuously observed
high-dimensional Ornstein-Uhlenbeck process when the drift is exactly or
approximately low rank. In this setting, exact low rank induces non-stable directions and hence a non-ergodic regime, resulting in a poorly conditioned empirical covariance matrix. To address this difficulty, we introduce a Weighted
Nuclear Elastic Net Estimator that combines ridge regularization with a
nuclear-norm penalty expressed in the empirical likelihood geometry.

Under a general diagonalizable spectral framework, we establish
 oracle inequalities relative to arbitrary low-rank comparison
matrices. For near low-rank drifts, the approximation error is
naturally measured through the singular-value decay of the drift after
weighting by the regularized empirical covariance. The stochastic term is
controlled by self-normalized martingale arguments under appropriate choice of the tuning parameter. 

For a symmetric positive-semidefinite exact low-rank model, we verify the empirical-curvature condition required to
translate the weighted bound into a Frobenius-norm bound. With an appropriate
choice of tuning parameters, the resulting estimator satisfies, up to a
logarithmic factor, the standard rank-$r$ matrix-estimation scaling
$r d/T$: specifically, its squared Frobenius error is of order
$r d\log(T)/T$ with high probability, under an explicit
dimension-horizon condition.
\end{abstract}

\noindent\textbf{Keywords:} Ornstein-Uhlenbeck process; high-dimensional diffusion; nuclear norm; elastic net; self-normalized martingales; near low rank; oracle inequality.

\medskip
\noindent\textbf{AMS subject classifications:} 62M05, 62H12, 60G15, 60H10.

\tableofcontents

\section{Introduction}\label{sec:introduction}

The multivariate Ornstein-Uhlenbeck (OU) process is one of the basic models for continuous-time stochastic dynamics.  In this paper, we consider a $d$-dimensional process $(X_t)_{t\geq 0}$ satisfying
\begin{equation}\label{eq:intro-ou}
    dX_t=-A_0X_t\,dt+D\,dW_t,
\end{equation}
observed continuously over the time interval $[0,T]$,
where $A_0\in\mathbb{R}^{d\times d}$ is an unknown drift matrix,  and $W$ is a standard $d$-dimensional Brownian motion. Due to complete path observation, $DD^{\top}$ can be assumed to be known. 
The model combines linear dependence, mean reversion, and a tractable Gaussian transition structure.  It originated in the physical description of Brownian motion \citep{UhlenbeckOrnstein1930} and has subsequently become a standard building block in, among other areas, interest-rate modelling \citep{Vasicek1977}, econometrics, biology, and engineering; see, for example, \citet{Gardiner2009}.

Equation~\eqref{eq:intro-ou} is the continuous-time analogue of a vector autoregression: sampling the process at a fixed interval $h>0$ yields a VAR(1) model with transition matrix $\exp(-hA_0)$. This connection, together with the explicit likelihood available under continuous observation, has made the OU process a fundamental benchmark for statistical inference in stochastic differential equations.
To briefly recall the classical fixed-dimensional theory, consider the diagonal case
\[
A_0=\operatorname{diag}(a_1,\ldots,a_d).
\]
If $a_i>0$ for all $i$, the process is ergodic and admits a unique stationary solution; we refer to corresponding eigenvectors as \textit{stable directions}. In this case, the maximum likelihood estimator (MLE) achieves the classical $\sqrt{T}$ convergence rate. In contrast, if some eigenvalues satisfy $a_i=0$ or $a_i<0$, the process is no longer ergodic;
the corresponding eigenvectors are referred to as \textit{non-stable directions}. The asymptotic behaviour of the MLE then depends on the spectral structure of $A_0$: directions corresponding to $a_i=0$ are estimated at rate $T$, whereas directions with $a_i<0$ are estimated at an exponential rate. We refer to \citet{Kutoyants2004} and \citet{BasakLee2008} for comprehensive treatments.

The high-dimensional regime is considerably more delicate: the $d^2$ entries of $A_0$ can be comparable with or substantially exceed the information available from a trajectory observed on $[0,T]$, so meaningful estimation requires additional structure.
Most existing high-dimensional theory for OU models has imposed an entrywise or row-wise sparsity condition on the drift matrix.  Under continuous observation and an ergodic OU model, \citet{GaiffasMatulewicz2019} developed Lasso and adaptive-Lasso procedures, obtaining non-asymptotic oracle inequalities together with asymptotic variable-selection results.  \citet{CiolekMarushkevychPodolskij2020} established oracle inequalities for Lasso and Dantzig estimators, improved the available rates, and showed that the relevant restricted-eigenvalue property follows from ergodicity rather than being imposed as an additional assumption.  Their later work \citep{CiolekMarushkevychPodolskij2025} extends Lasso theory to general high-dimensional parametric diffusion models, while \citet{MarushkevychPinaPodolskij2025} study support recovery and asymptotic normality for an adaptive-Lasso estimator.  These contributions provide a detailed understanding of sparse drift recovery from continuously observed diffusion trajectories.

Recent work has also broadened the observation schemes and driving-noise assumptions.  \citet{AmorinoPinaPodolskij2025} analyse the effect of discrete sampling on Lasso drift estimation and show when the continuous-observation rate can be retained.  For high-dimensional L\'{e}vy-driven OU processes, \citet{DexheimerJezska2026} derive sharp sparse oracle inequalities for Lasso and Slope estimators.  The common feature of this literature is that structural simplicity is encoded through zeros of $A_0$.  Such assumptions are well suited to network reconstruction, but they do not capture systems whose dynamics are governed by a small number of latent adjustment directions while individual matrix entries need not be sparse.

Low rank provides a different and genuinely global form of dimension reduction.  In general matrix regression, a rank constraint asserts that the rows and columns of the coefficient matrix lie in low-dimensional unknown subspaces.  Estimation with a direct rank penalty has been studied, for example, by \citet{BuneaSheWegkamp2011} and \citet{Klopp2011}; although statistically attractive, it leads to a non-convex optimisation problem.  The nuclear norm, defined as the sum of singular values, is the standard convex surrogate for rank and has become a central tool for low-rank matrix estimation \citep{RechtFazelParrilo2010,NegahbanWainwright2011,Koltchinskii2011}.  In particular, the framework of \citet{NegahbanWainwright2011} yields non-asymptotic Frobenius-norm bounds for both exactly and approximately low-rank matrices under restricted strong convexity.  Weighted or adaptive nuclear penalties can further reduce shrinkage bias and improve rank recovery in multivariate regression \citep{ChenDongChan2013}.

Beyond linear regression, reduced-rank drift matrices arise naturally in models for multivariate stochastic dynamics. One of the best-known examples is cointegration, where only a small number of long-run equilibrium relations govern the joint evolution of the system.  In a discrete-time vector error-correction model, the long-run matrix $\Pi=\alpha\beta^{\top}$ has reduced rank, and $\operatorname{rank}(\Pi)$ is the cointegration rank \citep{EngleGranger1987,Johansen1995}.  A related continuous-time theory has been developed for multivariate Gaussian OU diffusions by \citet{KesslerRahbek2004} and for broader multivariate OU models by \citet{Fasen2013}.  

Despite this connection, direct high-dimensional estimation of a low-rank OU drift remains comparatively underdeveloped.  \citet{BasakLee2008} treat estimation in general, possibly non-stable, multivariate OU systems, including configurations with zero eigenvalues, but do not consider low-rank regularisation or high-dimensional risk bounds.  In a more recent and different setting, \citet{Palaisti2026} studies a low-rank-plus-sparse drift for discretely observed L\'{e}vy-driven OU processes.  To the best of our knowledge, a non-asymptotic theory for nuclear-norm-based estimation of an exactly or approximately low-rank drift in a continuously observed Gaussian OU model has not previously been developed.

One reason why this problem has remained comparatively unexplored  is that, unlike in classical regression models, low-rank drift matrices in OU processes are not merely a structural assumption on the unknown parameter. Rather, they fundamentally change the probabilistic structure of the observations. Indeed, an exact low-rank drift necessarily introduces zero eigenvalues, so that the diffusion contains both stable and non-stable directions. As a consequence, the process ceases to be ergodic, moving  beyond the classical statistical framework for OU models. This change has important implications for statistical estimation, the first of which concerns the empirical design matrix
\[
C_T:=\frac{1}{T}\int_0^T X_tX_t^{\top}dt.
\]
 In the rank-deficient regime, the state process contains directions with fundamentally different growth behaviour, which may result in a highly heterogeneous spectrum of $C_T$. Under the natural normalization, $C_T$ can even become nearly singular; Section~\ref{sec:exact-low-rank} makes this phenomenon explicit in the exact low-rank setting. This intrinsic ill-conditioning complicates both the computation of the estimator and the non-asymptotic analysis of the likelihood contrast. Consequently, the estimation problem combines the challenges of high-dimensional statistics with those arising from a fundamentally different probabilistic regime.

In view of these challenges, the literature on regularized estimation for reduced-rank cointegrated systems provides useful methodological guidance, while also highlighting the limitations of existing approaches.  \citet{LevakovaDitlevsen2024} review rank, nuclear, adaptive-nuclear, Lasso, ridge, and elastic-net approaches for high-dimensional vector error-correction models.  Their numerical comparison shows that performance is strongly regime dependent: the ordinary nuclear penalty is a useful convex rank surrogate, but it is not uniformly the best-performing method and can overestimate the rank.  Nevertheless, their review highlights that rank reduction, often supplemented by additional regularisation, is essential in ill-conditioned cointegrated regressions.  In particular, they recommend adding a ridge term to nuclear-norm fitting to improve numerical stability.  This is consistent with the classical role of ridge regularisation under collinearity \citep{HoerlKennard1970} and with elastic-net regularisation more generally \citep{ZouHastie2005,MukherjeeZhu2011}.

Motivated by this feature, we introduce a \emph{Weighted Nuclear Elastic Net Estimator} (WNEE).  The estimator combines a nuclear-norm component, which promotes a low-rank drift, with an additional $\ell_2$-type stabilisation component.  The latter regularises the weakly identified directions induced by the random covariance structure, while the weighting is chosen to retain a rank-adaptive treatment of the singular spectrum. The ridge component is therefore not merely a numerical stabilisation device, but an essential ingredient for the theoretical analysis developed in this paper. The precise estimator and its tuning parameters are introduced in Section \ref{sec:model-estimator}.


The present article develops a non-asymptotic theory for estimation of low-rank and approximately low-rank drift matrices in multivariate OU processes. First, we formulate a convex, ridge-stabilised nuclear-norm procedure
tailored to the continuous-time likelihood. Second, we establish a high-probability oracle inequality in the empirical Fisher geometry of the observed diffusion trajectory, relative to arbitrary low-rank comparison
matrices. This result applies to both exactly and approximately low-rank targets. Third, under an empirical-curvature condition, we convert the weighted oracle inequality into a Frobenius-norm bound. Finally, we verify this curvature condition explicitly in a symmetric exact low-rank model and obtain an $rd\log(T)/T$ high-probability squared Frobenius-norm bound, where $r$ denotes the rank of $A_0$.

Section \ref{sec:model-estimator} introduces the model, the Weighted Nuclear Elastic Net Estimator, and the standing assumptions.  Section \ref{sec:main-results} states the main oracle inequalities.  Section \ref{sec:auxiliary-results} contains the auxiliary probabilistic results and the proofs of the main theorems.  Section \ref{sec:exact-low-rank} develops the exact low-rank regime, deriving refined score control, verifying the empirical-curvature condition and establishing explicit Frobenius-norm error bounds.  Section \ref{sec:numerical-study} presents a numerical study.

\section{Model, Assumptions and the Weighted Nuclear Elastic Net Estimator}
\label{sec:model-estimator}

\subsection{Notation}
\label{subsec:notation}

All random variables are defined on a filtered probability space $(\Omega, \mathcal{F}, (\mathcal{F}_t)_{t\geq 0},\mathbb{P})$. If we want to emphasize the dependence of 
probability measure or expectation on the underlying parameter
$\theta$, we write $\mathbb{P}_\theta$ or $\E_\theta$.

All vectors are column vectors. For $m,n\in\mathbb N$, let
$\mathbb M_{m\times n}:=\mathbb R^{m\times n}$ and write
$\mathbb M_d:=\mathbb M_{d\times d}$. For matrices $A,B$ of compatible
sizes, $A^\top$ denotes the transpose of $A$, $\operatorname{tr}(A)$ its trace,
and
\[
    \langle A,B\rangle_F:=\operatorname{tr}(A^\top B)
\]
denotes the Frobenius inner product. We write $\|A\|_F$,
$\|A\|_{\mathrm{op}}$, and $\|A\|_*$ for the Frobenius, operator, and nuclear
(Schatten-$1$) norms, respectively. If $q=m\wedge n$ and
$A\in\mathbb M_{m\times n}$, its singular values are denoted by
\[
    \sigma_1(A)\geq\sigma_2(A)\geq\cdots\geq\sigma_q(A)\geq 0,
\]
and $\operatorname{rank}(A)$ denotes its rank. Given a singular-value
decomposition $A=\sum_{j=1}^q \sigma_j(A)u_jv_j^\top$, define the rank-$s$
truncation
\[
    A_{[s]}:=\sum_{j=1}^s \sigma_j(A)u_jv_j^\top,
    \qquad s\in\{0,\ldots,q\},
\]
with $A_{[0]}:=0$.

For a symmetric matrix $H$, $\lambda_{\min}(H)$ and
$\lambda_{\max}(H)$ denote its smallest and largest eigenvalues. The notation
$H\succeq0$ means that $H$ is positive semidefinite. We write $I_d$ for the
$d\times d$ identity matrix, $(x)_+:=\max\{x,0\}$ for $x\in\mathbb R$, and
$a\wedge b:=\min\{a,b\}$ and $a\vee b:=\max\{a,b\}$. For vector-valued
processes $(U_t)$ and $(V_t)$, we use the empirical $L^2$ inner product
\[
    \langle U,V\rangle_{L^2_T}
    :=\frac{1}{T}\int_0^T U_t^\top V_t\,dt.
\]

\subsection{Continuous-time model and likelihood}
\label{subsec:model-likelihood}

Under continuous observation, the covariance matrix $Q=DD^\top$ in \eqref{eq:intro-ou}  is identified pathwise from quadratic variation. Hence, throughout the paper,
we consider continuous observations on $[0,T]$ of a $d$-dimensional Ornstein-Uhlenbeck
process satisfying
\begin{equation}
    dX_t=-A_0X_t\,dt+dW_t,
    \qquad 0\leq t\leq T,
\label{eq:ou-model}
\end{equation}
where $A_0\in\mathbb M_d$ is the unknown drift matrix, $(W_t)_{t\geq0}$ is a
standard $d$-dimensional Brownian motion, and $X_0=0$. The dimension $d=d_T$ is allowed to grow with the observation horizon $T$. Throughout Sections~2-5, we work with the normalized diffusion coefficient
in \eqref{eq:ou-model}.  The framework can also accommodate non-isotropic diffusion by imposing the structural assumptions on the whitened drift matrix.


Let $\mathbb P_A^T$ denote the law on $C([0,T];\mathbb R^d)$ of the model
\eqref{eq:ou-model} with drift parameter $A$. By Girsanov's theorem, the
log-likelihood ratio with respect to the driftless law $\mathbb P_0^T$ is
\[
    \log\frac{d\mathbb P_A^T}{d\mathbb P_0^T}(X)
    =-
    \int_0^T\langle AX_t,dX_t\rangle
    -\frac12\int_0^T\|AX_t\|_2^2\,dt.
\]
Accordingly, minimizing the normalized negative log-likelihood is equivalent to
minimizing the contrast
\begin{equation}
    \mathcal L_T(A)
    :=\frac{1}{T}\int_0^T\langle AX_t,dX_t\rangle
      +\frac{1}{2T}\int_0^T\|AX_t\|_2^2\,dt,
    \qquad A\in\mathbb M_d.
\label{eq:contrast}
\end{equation}
We
introduce the empirical covariance, the empirical score matrix, and the
martingale term via
\begin{equation}
    C_T:=\frac{1}{T}\int_0^T X_tX_t^\top\,dt,
    \qquad
    Z_T:=-\frac{1}{T}\int_0^T dX_tX_t^\top,
    \qquad
    \varepsilon_T:=\frac{1}{T}\int_0^T dW_tX_t^\top.
\label{eq:empirical-objects}
\end{equation}
The contrast then has the quadratic representation
\begin{equation}
    \mathcal L_T(A)
    =\frac12\|AC_T^{1/2}\|_F^2-\langle A,Z_T\rangle_F,
\label{eq:quadratic-likelihood}
\end{equation}
and the model identity in \eqref{eq:ou-model} gives
\begin{equation}
    Z_T=A_0C_T-\varepsilon_T,
    \qquad
    \nabla\mathcal L_T(A)=(A-A_0)C_T+\varepsilon_T.
\label{eq:score-decomposition}
\end{equation}
In particular, for every $A\in\mathbb M_d$,
\begin{equation}
    \mathcal L_T(A)-\mathcal L_T(A_0)
    =\frac12\big\|(A-A_0)C_T^{1/2}\big\|_F^2
      +\langle\varepsilon_T,A-A_0\rangle_F.
\label{eq:basic-likelihood-decomposition}
\end{equation}
Equation \eqref{eq:basic-likelihood-decomposition} separates the deterministic
quadratic curvature from the stochastic score term. When $C_T$ is invertible,
the unpenalized maximum-likelihood estimator is
\[
    \widehat A_{\mathrm{ML}}=Z_TC_T^{-1}.
\]
In the low-rank and high-dimensional regimes considered here, however, the
spectrum of $C_T$ may be markedly heterogeneous, making direct inversion poorly
conditioned even when $C_T$ is nonsingular.

\subsection{Weighted Nuclear Elastic Net Estimator}
\label{subsec:wnee}

For $\eta>0$, define the data-adaptive weight matrix
\begin{equation}
    B_{T,\eta}:=(C_T+\eta I_d)^{1/2}.
\label{eq:weight-matrix}
\end{equation}
Let $\lambda>0$ and $\eta>0$ be tuning parameters. The \emph{Weighted Nuclear
Elastic Net Estimator} (WNEE) is any minimizer
\begin{equation}
    \widehat A_{\lambda,\eta}
    \in\underset{A\in\mathbb M_d}{\operatorname{argmin}}
    \left\{
        \mathcal L_T(A)
        +\frac{\eta}{2}\|A\|_F^2
        +\lambda\big\|AB_{T,\eta}\big\|_*
    \right\}.
\label{eq:wnee}
\end{equation}
The second term is the ridge component, while the final term is an $\ell_1$
penalty on the singular values of $AB_{T,\eta}$.

Using \eqref{eq:quadratic-likelihood} and \eqref{eq:weight-matrix}, the smooth
part of the criterion in \eqref{eq:wnee} can be written as
\begin{equation}
    \mathcal L_T(A)+\frac{\eta}{2}\|A\|_F^2
    =\frac12\|AB_{T,\eta}\|_F^2-\langle A,Z_T\rangle_F.
\label{eq:weighted-geometry}
\end{equation}
Thus, the likelihood curvature and the nuclear penalty are expressed in the same
empirical geometry. The weight $B_{T,\eta}$ accounts for the fact that the
empirical information $C_T$ may scale differently in stable and non-stable
directions. The ridge component both regularizes weakly observed directions and
ensures that $B_{T,\eta}$ is invertible.

\begin{remark}
\label{rem:ridge-weight}
The ridge and weighted nuclear components play distinct roles. The ridge term
prevents small empirical eigenvalues of $C_T$ from destabilizing the optimization,
while the right weight $B_{T,\eta}$ calibrates singular-value shrinkage to the
information contained in the observed trajectory. Hence, the penalty is not
applied in a fixed Euclidean geometry: directions with different empirical scales
are regularized in comparable likelihood units.
\end{remark}

The non-asymptotic oracle inequalities are stated in
Section~\ref{sec:main-results}. Section~\ref{sec:auxiliary-results} gives both
an observable score-based choice of the tuning parameter $\lambda$ and, under
Assumption~\ref{ass:spectral-structure}, a deterministic alternative.

\subsection{Spectral framework}
\label{subsec:spectral-framework}

\begin{assumption}
\label{ass:spectral-structure}
There exist finite constants $\overline a>0$ and $\overline\kappa\geq1$, not
depending on $d$ or $T$, such that
\[
    A_0=P_0\Lambda_0P_0^{-1},
    \qquad
    \Lambda_0=\operatorname{diag}(\theta_1,\ldots,\theta_d),
\]
where
\[
    0\leq\theta_j\leq\overline a,
    \qquad j=1,\ldots,d,
    \qquad\text{and}\qquad
    \|P_0\|_{\mathrm{op}}\|P_0^{-1}\|_{\mathrm{op}}
    \leq\overline\kappa.
\]
\end{assumption}

\noindent
Assumption~\ref{ass:spectral-structure} allows zero eigenvalues and therefore
includes non-stable directions. It also excludes nontrivial Jordan blocks at zero,
which would lead to higher-order polynomial trends. The assumption does not
impose symmetry, positive semidefiniteness in the Euclidean geometry, or a known
rank on $A_0$.


The results in Sections~\ref{sec:model-estimator}-\ref{sec:auxiliary-results}
are formulated under the general spectral framework above and allow $A_0$ to be
only approximately low rank. In Section~\ref{sec:exact-low-rank}, we specialize
to the exact low-rank class
\[
    A_0=P\operatorname{diag}(a_1,\ldots,a_r,0,\ldots,0)P^\top,
\]
where $P$ is orthogonal and the nonzero eigenvalues are bounded away from zero
and infinity. This stronger structure yields an orthogonal decomposition into
stable Ornstein-Uhlenbeck and Brownian components. It is used in
Section ~\ref{sec:exact-low-rank} to obtain
explicit probability bounds for the score event and the empirical-curvature
condition.

\subsection{Empirical curvature condition}
\label{subsec:empirical-curvature}

The ridge-regularized contrast has the quadratic curvature
\begin{equation}
    \nabla^2\left(
        \mathcal L_T(\cdot)+\frac{\eta}{2}\|\cdot\|_F^2
    \right)(A)[M,M]
    =\|MB_{T,\eta}\|_F^2,
    \qquad A,M\in\mathbb M_d.
\label{eq:regularised-hessian}
\end{equation}
Hence, the following condition ensures that the ridge-regularized likelihood has
uniformly nondegenerate empirical curvature.

\begin{assumption}
\label{ass:empirical-curvature}
There exist a constant $c_0>0$, independent of $d$ and $T$, and a number
$\pi_T\in[0,1]$ such that the event
\begin{equation}
    \mathcal R_T(c_0)
    :=
    \left\{
        \|MB_{T,\eta}\|_F^2\geq c_0\|M\|_F^2
        \quad\text{for every }M\in\mathbb M_d
    \right\}
\label{eq:empirical-curvature-event}
\end{equation}
satisfies
\begin{equation}
    \mathbb P_{A_0}\big(\mathcal R_T(c_0)\big)\geq1-\pi_T.
\label{eq:empirical-curvature-probability}
\end{equation}
\end{assumption}

\noindent
Since $B_{T,\eta}^2=C_T+\eta I_d$, the event
$\mathcal R_T(c_0)$ is equivalent to
\[
    \lambda_{\min}(C_T+\eta I_d)\geq c_0.
\]
The deterministic inequality $C_T+\eta I_d\succeq\eta I_d$ always implies
\[
    \|MB_{T,\eta}\|_F^2\geq\eta\|M\|_F^2.
\]
However, this bound becomes uninformative when $\eta=\eta_T$ tends to zero.
Assumption~\ref{ass:empirical-curvature} requires a lower bound that remains
nondegenerate at the scale relevant for the oracle inequalities. 


In the general near-low-rank framework, Assumption~\ref{ass:empirical-curvature} is retained as a high-probability design condition. Section~\ref{sec:exact-low-rank} treats the symmetric exact low-rank model of Assumption~\ref{ass:empirical-curvature}, where the stable and Brownian directions admit an explicit orthogonal decomposition. This structure permits a direct non-asymptotic verification of the curvature condition: for every admissible choice of $c_0$, Corollary ~\ref{cor:exact-low-rank-curvature-simple} gives an explicit dimension-horizon condition under which \eqref{eq:empirical-curvature-probability} holds with a prescribed failure probability $\pi_T$.

\section{Main Results: Non-Asymptotic Oracle Inequalities}
\label{sec:main-results}

Let's recall that for $\eta>0$ 
\[
    B_{T,\eta}=(C_T+\eta I_d)^{1/2}
\]
and define the score event
\begin{equation}
    \mathcal S_T(\lambda,\eta)
    :=
    \left\{
        2\big\|\varepsilon_TB_{T,\eta}^{-1}\big\|_{\mathrm{op}}
        \leq\lambda
    \right\}.
\label{eq:score-event}
\end{equation}
For $r\in\{0,\ldots,d\}$, write
\[
    \mathcal M_r
    :=\big\{A\in\mathbb M_d:\operatorname{rank}(A)\leq r\big\}.
\]
The relevant approximation error is measured in the empirical likelihood
geometry. Accordingly, define
\begin{equation}
    \mathfrak a_{r,T,\eta}(A_0)
    :=
    \inf_{A\in\mathcal M_r}
    \big\|(A-A_0)B_{T,\eta}\big\|_F^2.
\label{eq:weighted-approximation-error}
\end{equation}
The infimum in
\eqref{eq:weighted-approximation-error} is attained at
\begin{equation}
    A_{0,[r]}^{T,\eta}
    :=
    \big(A_0B_{T,\eta}\big)_{[r]}B_{T,\eta}^{-1},
\label{eq:weighted-rank-r-approximation}
\end{equation}
where $(A_0B_{T,\eta})_{[r]}$ denotes the ordinary rank-$r$ singular-value
truncation of $A_0B_{T,\eta}$. 
Indeed, since \(B_{T,\eta}\) is invertible, the map
\[
A \longmapsto C:=AB_{T,\eta}
\]
is a bijection on \(\mathbb{R}^{d\times d}\) and preserves rank:
\begin{equation}\label{eq:rank_preservation_pr}
\operatorname{rank}(C)
=
\operatorname{rank}(AB_{T,\eta})
=
\operatorname{rank}(A).
\end{equation}
Therefore,
\[
\begin{aligned}
\mathfrak a_{r,T,\eta}(A_0)
&=
\inf_{A\in\mathcal M_r}
\left\|(A-A_0)B_{T,\eta}\right\|_F^2 \\
&=
\inf_{\operatorname{rank}(C)\le r}
\left\|C-A_0B_{T,\eta}\right\|_F^2.
\end{aligned}
\]
By the Eckart-Young-Mirsky theorem, one minimizer of the latter
problem is the rank-\(r\) singular-value truncation
\[
C^\star=(A_0B_{T,\eta})_{[r]}.
\]
Transforming back through \(A=CB_{T,\eta}^{-1}\), we obtain ~\eqref{eq:weighted-rank-r-approximation}.
Consequently, 
\begin{equation}
    \mathfrak a_{r,T,\eta}(A_0)
    =
    \sum_{j=r+1}^d\sigma_j^2\big(A_0B_{T,\eta}\big).
\label{eq:weighted-tail-singular-values}
\end{equation}
In particular, $\mathfrak a_{r,T,\eta}(A_0)=0$ whenever
$\operatorname{rank}(A_0)\leq r$.

For comparison with the usual unweighted notion of near low rank, let
$A_{0,[r]}$ denote the ordinary rank-$r$ singular-value truncation of $A_0$.
Then
\begin{equation}
    \mathfrak a_{r,T,\eta}(A_0)
    \leq
    \big\|(A_{0,[r]}-A_0)B_{T,\eta}\big\|_F^2
    \leq
    \|B_{T,\eta}\|_{\mathrm{op}}^2
    \sum_{j=r+1}^d\sigma_j^2(A_0).
\label{eq:unweighted-to-weighted-approximation}
\end{equation}
Thus, ordinary near low-rank structure provides a sufficient, although
potentially conservative, upper bound for the approximation error in the
empirical likelihood geometry. 

\begin{remark}
\label{rem:weighted-near-low-rank}
According to the exact-rank preservation property \eqref{eq:rank_preservation_pr}, if $A_0$ has exact rank at most $r$, then
$\mathfrak a_{r,T,\eta}(A_0)=0$, independently of the empirical covariance
matrix.

The situation is different for approximate low-rank structure. The oracle
inequality adapts directly to the singular-value decay of
$A_0B_{T,\eta}$, or equivalently to the weighted approximation error
$\mathfrak a_{r,T,\eta}(A_0)$, rather than to the unweighted singular-value
decay of $A_0$ alone. The conversion in
\eqref{eq:unweighted-to-weighted-approximation} is therefore only a sufficient
bound. Its factor $\|B_{T,\eta}\|_{\mathrm{op}}^2$ can be large when the
trajectory has directions with substantially different empirical scales, as may
occur in the presence of non-stable components.

This dependence on $B_{T,\eta}$ is intrinsic to the likelihood geometry. Directions with larger empirical energy contribute more
strongly to the likelihood contrast and are consequently required to be
approximated more accurately. Thus, for approximately low-rank models, the
natural structural quantity is the weighted tail
\[
    \sum_{j=r+1}^d
    \sigma_j^2\big(A_0B_{T,\eta}\big).
\]
\end{remark}

The next proposition assesses the probability of the set 
$\mathcal S_T(\lambda,\eta)$.
\begin{proposition}
\label{prop:deterministic-score-level}
Suppose that Assumption~\ref{ass:spectral-structure} holds. For every
$\delta\in(0,1)$, define
\begin{equation}
    \overline\lambda_{T,\eta,\delta}
    :=
    \frac{8\sqrt{2}}{3\sqrt{T}}
    \left\{
        d\log 9
        +\log\!\left(\frac{2}{\delta}\right)
        +\frac{d}{2}\log\!\left(
            1+\frac{\overline\kappa^2T}{\eta\delta}
        \right)
    \right\}^{1/2}.
\label{eq:deterministic-lambda}
\end{equation}
Then
\begin{equation}
    \mathbb P_{A_0}\!\left(
        \mathcal S_T(\overline\lambda_{T,\eta,\delta},\eta)
    \right)
    \geq 1-\delta.
\label{eq:deterministic-score-probability}
\end{equation}
\end{proposition}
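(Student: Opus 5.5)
Our plan is to use a self-normalized martingale inequality for vector-valued processes (the de la Peña / Abbasi-Yadkori type bound), combined with an $\varepsilon$-net over the unit sphere. The constants $\log 9$ (a $1/4$-net of $S^{d-1}$ has at most $9^d$ points) and $\frac d2\log(1+\overline\kappa^2T/(\eta\delta))$ (a log-determinant ratio) point to exactly this combination.

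First we rewrite the operator norm. Let $S_T:=\int_0^T X_t\,dW_t^\top\in\mathbb M_d$, so that $\varepsilon_T=T^{-1}S_T^\top$ and
\[
\|\varepsilon_TB_{T,\eta}^{-1}\|_{\mathrm{op}}=\frac1{\sqrt T}\big\|(TC_T+T\eta I_d)^{-1/2}S_T\big\|_{\mathrm{op}},
\]
using $B_{T,\eta}^{-1}=\sqrt T\,(V_T+\eta T I_d)^{-1/2}$ with $V_T:=\int_0^TX_tX_t^\top dt$. For a fixed unit vector $u$, the process $M_t^u:=\int_0^tX_s\,d(u^\top W_s)$ is a continuous $\mathbb R^d$-valued martingale driven by the scalar Brownian motion $u^\top W$, with quadratic variation $V_t$. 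The method of mixtures (Gaussian prior $N(0,(\eta T)^{-1}I_d)$ on the exponential supermartingale $\exp(\theta^\top M_t^u-\tfrac12\theta^\top V_t\theta)$) then gives, with probability at least $1-\delta'$,
\[
\|(V_T+\eta TI)^{-1/2}M_T^u\|_2^2\le 2\log\frac{1}{\delta'}+\log\frac{\det(V_T+\eta T I)}{\det(\eta TI)}.
\]

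The main obstacle is that the log-determinant is random, whereas the threshold \eqref{eq:deterministic-lambda} is deterministic. We bound it by $d\log(1+\lambda_{\max}(V_T)/(\eta T))$ and control $\E\,\operatorname{tr}V_T$ using Assumption~\ref{ass:spectral-structure}. Since $X_t=\int_0^te^{-(t-s)A_0}dW_s$ and $\|e^{-uA_0}\|_{\mathrm{op}}\le\|P_0\|\|P_0^{-1}\|\,\|e^{-u\Lambda_0}\|\le\overline\kappa$ (because $\theta_j\ge0$), we get $\E\|X_t\|^2\le \overline\kappa^2 d\,t$. Hence $\E\operatorname{tr}V_T\le \overline\kappa^2dT^2/2$. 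Markov's inequality then gives $\lambda_{\max}(V_T)\le \overline\kappa^2 dT^2/\delta$ outside an event of probability about $\delta/2$.

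To drop the extra factor $d$ in the logarithm, we would work with $\operatorname{tr}$ directly: by concavity of $\log\det$ (AM–GM on the eigenvalues), $\log\det(I+V_T/(\eta T))\le d\log(1+\operatorname{tr}V_T/(d\eta T))$, which matches $\frac d2\log(1+\overline\kappa^2T/(\eta\delta))$ up to the constants. Allocating $\delta/2$ to this event and $\delta/2$ to the martingale events accounts for the term $\log(2/\delta)$.

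Finally we pass from fixed $u$ to the operator norm. Take a $1/4$-net $\mathcal N$ of $S^{d-1}$ for the right singular direction, of size at most $9^d$, and apply a union bound with $\delta'=\delta/(2\cdot9^d)$; this produces the term $d\log9$. A net approximation of the form $\|G\|_{\mathrm{op}}\le c\max_{u\in\mathcal N}\|Gu\|$ with $G=(V_T+\eta TI)^{-1/2}S_T$ produces the constant factor. After accounting for the factor $2$ in the score event, this should yield the prefactor $8\sqrt2/3$. One technicality is that the determinant bound holds on the Markov event, but the mixture inequality holds uniformly in time and on the same probability space, so intersecting the events is harmless.
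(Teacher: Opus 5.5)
Your proposal is correct and is essentially the paper's argument. It uses the same ingredients: the Gaussian-mixture supermartingale for each direction of a $1/4$-net (at most $9^d$ points, net constant $4/3$), Markov's inequality on $\operatorname{tr}C_T$ using $\E\operatorname{tr}C_T\le\overline\kappa^2dT/2$, the AM--GM bound $\log\det(I_d+\eta^{-1}C_T)\le d\log(1+\operatorname{tr}C_T/(d\eta))$, and a $\delta/2$--$\delta/2$ split. Together these reproduce the threshold $\overline\lambda_{T,\eta,\delta}$ exactly, constant $8\sqrt2/3$ included, not just up to constants.

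The only difference is bookkeeping. The paper first proves the data-driven version with the random $\log\det$ term and then inserts the deterministic trace envelope at level $\delta/2$. You instead do everything in a single union bound with $\delta'=\delta/(2\cdot 9^d)$. Your preliminary $\lambda_{\max}$ detour and the remark about uniformity in time are not needed.
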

The level in \eqref{eq:deterministic-lambda} is deterministic: it depends only
on $T$, $d$, $\eta$, $\delta$, and the spectral constant
$\overline\kappa$. In particular, if $\eta=\beta/T$ for a fixed $\beta>0$ and
$\delta_T=T^{-q}$ for some fixed $q>0$, then
\[
    \lambda_{T,\eta,\delta_T}^2
    \lesssim
    \frac{d_T\log T}{T}.
\]
The next theorem demonstrates the convergence rate of the estimator $\widehat A_{\lambda,\eta}$ with respect to Frobenius norm. 

\begin{theorem}
\label{thm:oracle-inequalities}
There exist universal numerical constants $C_1,C_2,C_3>0$ such that, on the
score event $\mathcal S_T(\lambda,\eta)$, the WNEE
$\widehat A_{\lambda,\eta}$ defined in \eqref{eq:wnee} satisfies for all $r\in\{0,\ldots,d\}$,
\begin{equation}
\begin{aligned}
    \big\|\big(\widehat A_{\lambda,\eta}-A_0\big)B_{T,\eta}\big\|_F^2
    \leq{}&
    C_1\mathfrak a_{r,T,\eta}(A_0)
    +C_2\lambda^2r \\
    &\quad
    +C_3\eta^2\big\|A_0B_{T,\eta}^{-1}\big\|_F^2.
\end{aligned}
\label{eq:weighted-oracle-inequality}
\end{equation}
Moreover, under the Assumption \ref{ass:empirical-curvature} on the event
$\mathcal S_T(\lambda,\eta)\cap\mathcal R_T(c_0)$,
\begin{equation}
\begin{aligned}
    \big\|\widehat A_{\lambda,\eta}-A_0\big\|_F^2
    \leq{}&
    \frac{C_1}{c_0}\mathfrak a_{r,T,\eta}(A_0)
    +\frac{C_2}{c_0}\lambda^2r 
    +\frac{C_3}{c_0}\eta^2
    \big\|A_0B_{T,\eta}^{-1}\big\|_F^2 \\
    \leq &
    \frac{C_1}{c_0}\mathfrak a_{r,T,\eta}(A_0)
    +\frac{C_2}{c_0}\lambda^2r+
    \frac{C_3}{c_0^2}\eta^2\|A_0\|_F^2.
\end{aligned}
\label{eq:frobenius-oracle-inequality}
\end{equation}
\end{theorem}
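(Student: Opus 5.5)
The plan is to run the standard Negahban--Wainwright decomposable-regularizer argument, transported into the weighted geometry by the change of variables $C=AB$ with $B:=B_{T,\eta}$. Write $\widehat\Delta:=(\widehat A_{\lambda,\eta}-A_0)B$. By \eqref{eq:weighted-geometry} and \eqref{eq:score-decomposition}, $Z_T=A_0C_T-\varepsilon_T=A_0B^2-\eta A_0-\varepsilon_T$, so the smooth part of the criterion equals, up to a constant,
\[
\tfrac12\|(A-A_0)B\|_F^2+\langle (A-A_0)B,\ \varepsilon_TB^{-1}+\eta A_0B^{-1}\rangle_F .
\]
Comparing the objective at $\widehat A_{\lambda,\eta}$ with a comparison matrix $A^\star:=A_{0,[r]}^{T,\eta}$ from \eqref{eq:weighted-rank-r-approximation}, with $\Delta^\star:=(A^\star-A_0)B$, gives the basic inequality
\[
\tfrac12\|\widehat\Delta\|_F^2-\tfrac12\|\Delta^\star\|_F^2
\le \langle \Delta^\star-\widehat\Delta,\ \varepsilon_TB^{-1}\rangle_F+\eta\langle \Delta^\star-\widehat\Delta,\ A_0B^{-1}\rangle_F+\lambda\big(\|A^\star B\|_*-\|\widehat A_{\lambda,\eta}B\|_*\big).
\]
It is cleaner to use the strong convexity of the objective in $C=AB$, with modulus $1$ in Frobenius norm, via the first-order optimality condition. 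This yields the same inequality with $\frac12\|\widehat\Delta-\Delta^\star\|_F^2$ added on the left, which avoids a factor loss.

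Next, the stochastic and bias terms are controlled. Set $E:=\widehat\Delta-\Delta^\star=(\widehat A_{\lambda,\eta}-A^\star)B$. On $\mathcal S_T(\lambda,\eta)$, trace duality gives $|\langle E,\varepsilon_TB^{-1}\rangle_F|\le \frac{\lambda}{2}\|E\|_*$. The ridge-bias term satisfies $\eta|\langle E,A_0B^{-1}\rangle_F|\le \eta\|A_0B^{-1}\|_F\|E\|_F\le \frac14\|E\|_F^2+\eta^2\|A_0B^{-1}\|_F^2$.

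For the penalty, use the decomposability of the nuclear norm at the rank-$\le r$ matrix $A^\star B$. Let $\mathcal P$ be the projection onto the tangent space built from its row and column spaces; then $E_{\mathcal P}$ has rank at most $2r$. This gives $\|A^\star B\|_*-\|A^\star B+E\|_*\le \|E_{\mathcal P}\|_*-\|E_{\mathcal P^\perp}\|_*$. Combining,
\[
\tfrac14\|E\|_F^2+\tfrac12\|\widehat\Delta\|_F^2\le \tfrac12\|\Delta^\star\|_F^2+\tfrac{3\lambda}{2}\|E_{\mathcal P}\|_*-\tfrac{\lambda}{2}\|E_{\mathcal P^\perp}\|_*+\eta^2\|A_0B^{-1}\|_F^2.
\]
Drop the negative term and bound $\|E_{\mathcal P}\|_*\le\sqrt{2r}\|E\|_F$. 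Young's inequality then absorbs $\frac{3\lambda}{2}\sqrt{2r}\|E\|_F\le \frac18\|E\|_F^2+C\lambda^2 r$. Since $\|\Delta^\star\|_F^2=\mathfrak a_{r,T,\eta}(A_0)$, this yields \eqref{eq:weighted-oracle-inequality} with explicit universal constants. No cone condition or restricted eigenvalue is needed, because the quadratic is exactly isotropic in $C$; this is the point of matching the penalty weight to the curvature.

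For \eqref{eq:frobenius-oracle-inequality}: on $\mathcal R_T(c_0)$ we have $\|MB\|_F^2\ge c_0\|M\|_F^2$, so dividing by $c_0$ gives the first line. For the second line, $\|A_0B^{-1}\|_F^2\le \|B^{-1}\|_{\mathrm{op}}^2\|A_0\|_F^2=\lambda_{\min}(C_T+\eta I_d)^{-1}\|A_0\|_F^2\le c_0^{-1}\|A_0\|_F^2$ on $\mathcal R_T(c_0)$.

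The main obstacle is the bookkeeping in the optimality step. It must be done for the nonsmooth convex objective in the variable $C$, using a subgradient of $\lambda\|\cdot\|_*$ and $1$-strong convexity, so that the quadratic appears with the right sign. The ridge shift, namely that $Z_T$ is expressed through $C_T$ rather than $B^2$, is what produces the $\eta A_0B^{-1}$ bias term, and it must be tracked correctly. Everything else is routine.
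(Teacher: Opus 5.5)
Your proof is correct and gives the theorem with better constants. It shares the paper's core step: decomposability of the nuclear norm on the tangent space, the $\tfrac{3\lambda}{2}\|E_{\mathcal P}\|_*$ bound, and rank $\le 2r$. Where it differs is in how the ridge bias is handled.

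\textbf{The paper's route.} The paper writes $Z_TB^{-1}=\Theta_{0,\eta}+G_{T,\eta}$ with the ridge-shrunken target $\Theta_{0,\eta}=A_0C_TB^{-1}$ and $G_{T,\eta}=-\varepsilon_TB^{-1}$. After the change of variables this is pure nuclear-norm denoising with noise of operator norm at most $\lambda/2$. It then applies the black-box Lemma~\ref{lem:nuclear-denoising} against the ridge-shrunken comparator $A_r^\star C_T(C_T+\eta I_d)^{-1}B$. This needs Lemma~\ref{lem:ridge-approximation}: the contraction $Q_{T,\eta}$ does not increase rank or the weighted distance. Only at the end does it reinsert $\eta A_0B^{-1}$, via $\|U+V\|_F^2\le 2\|U\|_F^2+2\|V\|_F^2$.

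\textbf{Your route.} You centre at $A_0B$ itself. The bias then enters the basic inequality as a linear perturbation $\eta A_0B^{-1}$, which is not small in operator norm. You control it in Frobenius norm by Cauchy--Schwarz and absorb it into the extra $\tfrac12\|E\|_F^2$ supplied by $1$-strong convexity in $C=AB$. This lets you compare directly with $A^\star=(A_0B)_{[r]}B^{-1}$, so you need neither the ridge-approximation lemma nor the final doubling.

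\textbf{What each buys.} Your route gives sharper constants. Using $\tfrac{3\lambda}{2}\sqrt{2r}\,\|E\|_F\le\tfrac14\|E\|_F^2+\tfrac92\lambda^2 r$ gives $(C_1,C_2,C_3)=(1,9,2)$; your $\tfrac18$ split gives $(1,18,2)$. The paper has $(4,72,2)$, so you get leading constant one on $\mathfrak a_{r,T,\eta}(A_0)$. The paper's route buys modularity: a reusable deterministic denoising lemma. It also makes explicit that the nuclear-norm step is centred at $A_{0,\eta}=A_0C_T(C_T+\eta I_d)^{-1}$, which the discussion after the theorem relies on.

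The passage to the Frobenius-norm bound on $\mathcal R_T(c_0)$, including $\|A_0B^{-1}\|_F^2\le c_0^{-1}\|A_0\|_F^2$, is the same in both.
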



The three terms on the right-hand sides of \eqref{eq:weighted-oracle-inequality}
and \eqref{eq:frobenius-oracle-inequality} have distinct interpretations. For a
fixed rank $r$, the term $\mathfrak a_{r,T,\eta}(A_0)$ is the weighted
approximation error incurred by replacing $A_0$ with its best rank-$r$
approximation in the empirical likelihood geometry. It decreases as
$r$ increases and vanishes whenever $\operatorname{rank}(A_0)\le r$. 

The term $\lambda^2 r$ is the stochastic estimation cost of fitting a
rank-$r$ matrix. It is the principal statistical complexity term and
corresponds to the usual rank-adaptive matrix-estimation rate. 


Finally, the last term is the ridge-bias contribution. Indeed, after the change
of variables, the nuclear-norm procedure is centred at the
ridge-shrunken target
\[
A_{0,\eta}
=
A_0C_T(C_T+\eta I_d)^{-1},
\]
rather than at \(A_0\) itself. Although
\(\eta^2\|A_0B_{T,\eta}^{-1}\|_F^2\) depends on the observed empirical
covariance, it admits a deterministic upper bound on the curvature
event $\mathcal R_T(c_0)$, where 
\[
\eta^2\|A_0B_{T,\eta}^{-1}\|_F^2
\le
\frac{\eta^2}{c_0}\|A_0\|_F^2.
\]
Thus, in the Frobenius-norm inequality~\eqref{eq:frobenius-oracle-inequality},
the ridge-bias contribution is bounded by
\[
\frac{C_3}{c_0^2}\eta^2\|A_0\|_F^2,
\]
which no longer depends on the empirical
covariance matrix. In particular, if
\(\operatorname{rank}(A_0)\le r\) and
\(\|A_0\|_{\mathrm{op}}\le L\), then
\[
\frac{C_3}{c_0^2}\eta^2\|A_0\|_F^2
\le
\frac{C_3L^2}{c_0^2}\eta^2r.
\]

For approximately low-rank matrices, the oracle rank $r$ may be selected
implicitly by balancing the decreasing approximation term
$\mathfrak a_{r,T,\eta}(A_0)$ against the increasing complexity term
$\lambda^2r$. The desirable regime is one in which both the
approximation error and the ridge bias are no larger than this
complexity term; the resulting bound is then of order $\lambda^2r$.
In particular, in the exact low-rank case, taking
$r\ge \operatorname{rank}(A_0)$ eliminates the approximation term
entirely. With a sufficiently small ridge level, the ridge-bias term is
then of smaller order, so that $\lambda^2r$ becomes the leading term.

\begin{remark}
Equation~\eqref{eq:weighted-oracle-inequality} does not require the
empirical-curvature event and provides adaptation over all ranks in the
weighted empirical geometry. The Frobenius-norm conclusion in \eqref{eq:frobenius-oracle-inequality} requires the additional event $\mathcal R_T(c_0)$, whose probability is controlled in
Section \ref{sec:exact-low-rank} for the exact low-rank symmetric model. 
\end{remark}

\begin{corollary}
\label{cor:exact-low-rank-oracle}
Suppose that $\operatorname{rank}(A_0)\leq r$. Then, on the event
$\mathcal S_T(\lambda,\eta)\cap\mathcal R_T(c_0)$,
\begin{equation}
    \big\|\widehat A_{\lambda,\eta}-A_0\big\|_F^2
    \leq
    \frac{C_2}{c_0}\lambda^2r
    +\frac{C_3}{c_0}\eta^2
    \big\|A_0B_{T,\eta}^{-1}\big\|_F^2.
\label{eq:exact-low-rank-oracle-inequality}
\end{equation}
\end{corollary}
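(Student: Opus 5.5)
This is a direct specialisation of the first line of the Frobenius oracle inequality \eqref{eq:frobenius-oracle-inequality} in \cref{thm:oracle-inequalities}. The plan is to apply that inequality with the given $r$ and show that the approximation term drops out.

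\emph{Step 1 (invoke the theorem).} Work on the event $\mathcal S_T(\lambda,\eta)\cap\mathcal R_T(c_0)$. Since \cref{thm:oracle-inequalities} holds simultaneously for every $r\in\{0,\ldots,d\}$, it holds in particular for the $r$ in the statement, and gives
\[
    \big\|\widehat A_{\lambda,\eta}-A_0\big\|_F^2
    \leq
    \frac{C_1}{c_0}\mathfrak a_{r,T,\eta}(A_0)
    +\frac{C_2}{c_0}\lambda^2r
    +\frac{C_3}{c_0}\eta^2\big\|A_0B_{T,\eta}^{-1}\big\|_F^2 .
\]

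\emph{Step 2 (kill the approximation term).} We need $\mathfrak a_{r,T,\eta}(A_0)=0$ whenever $\operatorname{rank}(A_0)\le r$, pointwise on $\Omega$. This holds because $B_{T,\eta}=(C_T+\eta I_d)^{1/2}$ is invertible for $\eta>0$. Hence, by \eqref{eq:rank_preservation_pr}, $\operatorname{rank}(A_0B_{T,\eta})=\operatorname{rank}(A_0)\le r$. So $\sigma_j(A_0B_{T,\eta})=0$ for all $j>r$, and \eqref{eq:weighted-tail-singular-values} gives
\[
    \mathfrak a_{r,T,\eta}(A_0)=\sum_{j=r+1}^d\sigma_j^2\big(A_0B_{T,\eta}\big)=0 .
\]
A shortcut avoids the singular values: $A_0$ itself belongs to $\mathcal M_r$, so the infimum in \eqref{eq:weighted-approximation-error} is at most $\|(A_0-A_0)B_{T,\eta}\|_F^2=0$.

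\emph{Step 3 (conclude).} Substituting Step 2 into Step 1 gives \eqref{eq:exact-low-rank-oracle-inequality}.

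There is no substantive obstacle here. The only point to check is that the theorem's constants $C_1,C_2,C_3$ are universal and that the inequality holds for all $r$ on the same event. Both are part of the statement of \cref{thm:oracle-inequalities}, so no choice of $r$ depends on the sample path.
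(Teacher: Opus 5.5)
Your proposal is correct and matches the paper's proof: on $\mathcal S_T(\lambda,\eta)\cap\mathcal R_T(c_0)$ you apply the first line of \eqref{eq:frobenius-oracle-inequality} and note that $A_0\in\mathcal M_r$ forces $\mathfrak a_{r,T,\eta}(A_0)=0$. Your ``shortcut'' in Step~2 is exactly the paper's argument, and your singular-value route via \eqref{eq:rank_preservation_pr} is an equally valid alternative.
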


\noindent
Under Assumption~\ref{ass:spectral-structure}, taking
$\lambda=\overline\lambda_{T,\eta,\delta}$ from
Proposition~\ref{prop:deterministic-score-level} gives
\[
    \mathbb P_{A_0}\big(
        \mathcal S_T(\lambda,\eta)
    \big)\geq1-\delta.
\]
Together with Assumption~\ref{ass:empirical-curvature}, the Frobenius-norm
conclusion \eqref{eq:frobenius-oracle-inequality} therefore holds with
probability at least $1-\delta-\pi_T$. In the exact low-rank setting,
Corollary~\ref{cor:exact-low-rank-score-tuning} gives a sharper deterministic
score level, while Section~\ref{sec:exact-low-rank} provides an explicit bound
for $\pi_T$.

\section{Auxiliary Results and Proofs}
\label{sec:auxiliary-results}


Throughout this section, we write
\[
    B:=B_{T,\eta}=(C_T+\eta I_d)^{1/2},
    \qquad
    \overline C_T:=\int_0^T X_tX_t^\top\,dt=TC_T,
    \qquad
    \overline\varepsilon_T:=\int_0^T dW_tX_t^\top=T\varepsilon_T.
\]
Moreover, define the self-normalized score matrix
\begin{equation}
    \mathcal M_{T,\eta}
    :=\overline\varepsilon_T
      \big(\overline C_T+T\eta I_d\big)^{-1/2}
    =\sqrt T\,\varepsilon_TB^{-1}.
\label{eq:unnormalized-self-normalized-score}
\end{equation}
Consequently,
\begin{equation}
    \mathcal S_T(\lambda,\eta)
    =
    \left\{
        \|\mathcal M_{T,\eta}\|_{\mathrm{op}}
        \leq \frac{\sqrt T\,\lambda}{2}
    \right\}.
\label{eq:score-event-unnormalized}
\end{equation}

\subsection{Self-normalized control of the score}
\label{subsec:score-control}

The goal of this subsection is to prove Proposition \ref{prop:deterministic-score-level}.
We begin with a self-normalized moment inequality for a fixed left direction.
It is a continuous-time version of the pseudo-maximization argument used in
multivariate self-normalization; see \cite{DeLaPenaKlassLai2009}.

\begin{lemma}
\label{lem:self-normalized-moment}
Let $u\in\mathbb R^d$ satisfy $\|u\|_2=1$, and let $a>0$. Set
\[
    \xi_t(u):=\overline\varepsilon_t^\top u
    =\int_0^t X_s\,d\langle u,W_s\rangle,
    \qquad
    V_t:=\int_0^t X_sX_s^\top\,ds.
\]
Then
\begin{equation}
    \mathbb E_{A_0}\!\left[
        \exp\!\left(
            \frac14 \xi_T(u)^\top(V_T+aI_d)^{-1}\xi_T(u)
        \right)
    \right]
    \leq
    \left\{
        \mathbb E_{A_0}\!\left[
            \det\!\left(I_d+a^{-1}V_T\right)^{1/2}
        \right]
    \right\}^{1/2}.
\label{eq:self-normalized-moment}
\end{equation}
\end{lemma}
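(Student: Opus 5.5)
Following the pseudo-maximization method of \citet{DeLaPenaKlassLai2009}, the plan is to integrate the exponential supermartingale over a Gaussian mixture in the direction parameter and then handle the random determinant with Cauchy--Schwarz.

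\emph{Exponential supermartingale.} For fixed $\theta\in\mathbb R^d$, put
\[
M_t(\theta):=\exp\Big(\theta^\top\xi_t(u)-\tfrac12\theta^\top V_t\theta\Big).
\]
Since $\langle u,W\rangle$ is a standard one-dimensional Brownian motion (because $\|u\|_2=1$), $\theta^\top\xi_t(u)=\int_0^t\theta^\top X_s\,d\langle u,W_s\rangle$ is a continuous local martingale. Its quadratic variation is $\theta^\top V_t\theta$. Hence $M_t(\theta)$ is a positive local martingale, hence a supermartingale, and $\mathbb E_{A_0}[M_T(\theta)]\le 1$. The fact that $X$ is adapted to the same Brownian filtration causes no problem here.

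\emph{Mixing.} I integrate $\theta$ against the Gaussian density $N(0,\,4a^{-1}I_d)$, with the factor $4$ chosen to produce the constant $\tfrac14$. To keep the algebra transparent, first rescale: replace $\theta$ by $\theta/2$ and observe that
\[
\exp\big(\tfrac12\theta^\top\xi-\tfrac14\theta^\top V\theta\big)=M(\theta)^{1/2}\exp\big(\tfrac18\ldots\big)
\]
is the wrong route. The cleaner route is Cauchy--Schwarz applied directly at the level of the mixture. Write
\[
\exp\big(\tfrac14\,\xi^\top(V+a)^{-1}\xi\big)=\Big[\exp\big(\tfrac12\,\xi^\top(V+a)^{-1}\xi\big)\det(I+a^{-1}V)^{-1/2}\Big]^{1/2}\det(I+a^{-1}V)^{1/4}.
\]
Applying Cauchy--Schwarz to this product gives the bound
\[
\big\{\mathbb E[\text{first bracket}]\big\}^{1/2}\big\{\mathbb E\det(I+a^{-1}V)^{1/2}\big\}^{1/2},
\]
which is exactly the right-hand side of \eqref{eq:self-normalized-moment} provided the first expectation is at most $1$.

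\emph{The key identity.} It remains to show that the first expectation is at most $1$, and this follows from a Gaussian integral. Integrate $M_T(\theta)$ against the $N(0,a^{-1}I_d)$ density $\phi(\theta)=(a/2\pi)^{d/2}e^{-a\|\theta\|^2/2}$. Completing the square with $\theta^\top(V+aI)\theta$ yields
\[
\int M_T(\theta)\phi(\theta)\,d\theta=\det(I+a^{-1}V_T)^{-1/2}\exp\big(\tfrac12\,\xi_T^\top(V_T+aI)^{-1}\xi_T\big).
\]
Tonelli's theorem and the supermartingale bound then give expectation at most $1$.

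The main obstacle I expect is the technical one of justifying Tonelli. This holds because the integrand is nonnegative and jointly measurable. A second point is the fact that the mixture bound needs no integrability of $\det(I+a^{-1}V_T)^{1/2}$. If that expectation is infinite, the inequality is trivial.
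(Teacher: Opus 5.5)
Your argument is correct and is essentially the paper's proof: you mix the exponential supermartingales $\exp(\theta^\top\xi_t(u)-\tfrac12\theta^\top V_t\theta)$ over $\theta\sim N(0,a^{-1}I_d)$ and complete the square to get $\mathbb E_{A_0}[\det(I_d+a^{-1}V_T)^{-1/2}\exp(\tfrac12\xi_T(u)^\top(V_T+aI_d)^{-1}\xi_T(u))]\le1$ via Tonelli, then apply Cauchy--Schwarz to the same factorization the paper uses. The only cleanup needed is to delete the abandoned remark about mixing with $N(0,4a^{-1}I_d)$ and the incomplete ``wrong route'' display, which play no role in the argument you actually carry out.
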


\begin{proof}
For every deterministic $\theta\in\mathbb R^d$, the exponential process
\[
    \mathcal E_t(\theta)
    :=\exp\!\left(
        \theta^\top \xi_t(u)-\frac12\theta^\top V_t\theta
    \right)
\]
is a nonnegative local martingale and hence a supermartingale. Integrating
$\mathcal E_t(\theta)$ with respect to the $N(0,a^{-1}I_d)$ density and
completing the square gives a nonnegative supermartingale
\[
    \mathcal Q_t
    :=
    \det\!\left(I_d+a^{-1}V_t\right)^{-1/2}
    \exp\!\left(
        \frac12 \xi_t(u)^\top(V_t+aI_d)^{-1}\xi_t(u)
    \right).
\]
In particular, $\mathbb E_{A_0}[\mathcal Q_T]\leq1$. By the Cauchy-Schwarz
inequality,
\begin{align*}
    \mathbb E_{A_0}\!\left[
        \exp\!\left(
            \frac14 \xi_T(u)^\top(V_T+aI_d)^{-1}\xi_T(u)
        \right)
    \right]
    &=
    \mathbb E_{A_0}\!\left[
        \mathcal Q_T^{1/2}
        \det\!\left(I_d+a^{-1}V_T\right)^{1/4}
    \right] \\
    &\leq
    \mathbb E_{A_0}[\mathcal Q_T]^{1/2}
    \left\{
        \mathbb E_{A_0}\!\left[
            \det\!\left(I_d+a^{-1}V_T\right)^{1/2}
        \right]
    \right\}^{1/2},
\end{align*}
which proves \eqref{eq:self-normalized-moment}.
\end{proof}

The next result represents a version of Proposition 
\ref{prop:deterministic-score-level} with a stochastic tuning parameter $\lambda_{T,\eta,\delta}$.
\begin{proposition}
\label{prop:data-driven-score-control}
For $\delta\in(0,1)$, define the observable tuning level
\begin{equation}
    \lambda_{T,\eta,\delta}
    :=
    \frac{8\sqrt{2}}{3\sqrt{T}}
    \left\{
        d\log 9+\log\!\left(\frac{1}{\delta}\right)
        +\frac12\log\det\!\left(I_d+\eta^{-1}C_T\right)
    \right\}^{1/2}.
\label{eq:data-driven-lambda}
\end{equation}
Then
\begin{equation}
    \mathbb P_{A_0}\!\left(
        \mathcal S_T(\lambda_{T,\eta,\delta},\eta)
    \right)
    \geq1-\delta.
\label{eq:data-driven-score-probability}
\end{equation}
\end{proposition}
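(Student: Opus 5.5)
The plan is to reduce the operator norm of the self-normalized score $\mathcal M_{T,\eta}$ to finitely many fixed left directions by a covering argument. For each fixed direction I will apply the supermartingale $\mathcal Q_t$ from the proof of Lemma~\ref{lem:self-normalized-moment} directly, with Markov's inequality, rather than using the Cauchy--Schwarz consequence \eqref{eq:self-normalized-moment}. Because the threshold in \eqref{eq:data-driven-lambda} contains the random term $\log\det(I_d+\eta^{-1}C_T)$, this determinant should be absorbed into $\mathcal Q_T$ and not bounded separately.

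\emph{Step 1 (fixed direction).} Put $a:=T\eta$. Then $V_T+aI_d=\overline C_T+T\eta I_d$ and $a^{-1}V_T=\eta^{-1}C_T$. For a unit vector $u$,
\[
\|\mathcal M_{T,\eta}^\top u\|_2^2=\xi_T(u)^\top(V_T+aI_d)^{-1}\xi_T(u)=:q(u).
\]
The mixture construction in the proof of Lemma~\ref{lem:self-normalized-moment} gives
\[
\mathcal Q_T=\exp\Big(\tfrac12 q(u)-\tfrac12\log\det(I_d+\eta^{-1}C_T)\Big),
\qquad \mathbb E_{A_0}[\mathcal Q_T]\le 1 .
\]
Let $L:=d\log 9+\log(1/\delta)+\tfrac12\log\det(I_d+\eta^{-1}C_T)$. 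Then $\{q(u)>2L\}=\{\mathcal Q_T>9^d/\delta\}$, and Markov's inequality gives
\[
\mathbb P_{A_0}(q(u)>2L)\le \delta\,9^{-d}.
\]

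\emph{Step 2 (net).} Let $\mathcal N$ be a $1/4$-net of the unit sphere with $|\mathcal N|\le 9^d$. For any $u$ in the sphere, pick $v\in\mathcal N$ with $\|u-v\|\le 1/4$. Then $\|\mathcal M^\top u\|\le\|\mathcal M^\top v\|+\tfrac14\|\mathcal M\|_{\mathrm{op}}$, hence
\[
\|\mathcal M_{T,\eta}\|_{\mathrm{op}}\le \tfrac43\max_{v\in\mathcal N}\|\mathcal M_{T,\eta}^\top v\|_2 .
\]
A union bound over $\mathcal N$ shows that, with probability at least $1-\delta$, $\max_{v\in\mathcal N}q(v)\le 2L$. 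On that event,
\[
\|\mathcal M_{T,\eta}\|_{\mathrm{op}}^2\le \tfrac{16}{9}\cdot 2L=\tfrac{32}{9}L .
\]

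\emph{Step 3 (match constants).} By \eqref{eq:score-event-unnormalized}, the score event is $\|\mathcal M_{T,\eta}\|_{\mathrm{op}}\le \sqrt T\lambda/2$. With $\lambda=\lambda_{T,\eta,\delta}$ we get $(\sqrt T\lambda/2)^2=\tfrac{4\cdot 2}{9}\cdot 4L=\tfrac{32}{9}L$, which is exactly the bound from Step 2. This gives \eqref{eq:data-driven-score-probability}.

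The only delicate point is Step 1, where the random normalization must be handled. The threshold is random and the local-martingale mixture $\mathcal Q_t$ is only a supermartingale, but $\mathbb E[\mathcal Q_T]\le1$ is all that is needed. I will check that the Gaussian-mixture computation yields exactly the exponent $\tfrac12 q(u)$ and the determinant factor $\det(I_d+a^{-1}V_T)^{-1/2}$. It also needs to hold for every fixed deterministic $u$, which is what the union bound requires.
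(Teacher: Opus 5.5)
Your proposal is correct and follows essentially the same route as the paper's proof. Like the paper, you apply Markov's inequality directly to the mixture supermartingale $\mathcal Q_T$ with the random log-determinant absorbed into the threshold, take a union bound over a $1/4$-net of cardinality at most $9^d$ at level $\delta 9^{-d}$, and use the $4/3$ net inequality, which reproduces the constant $8\sqrt{2}/3$ exactly.
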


\begin{proof}
Let $\mathcal N$ be a deterministic $1/4$-net of the Euclidean unit sphere in
$\mathbb R^d$ such that $|\mathcal N|\leq9^d$. For $u\in\mathcal N$, use the
mixture supermartingale constructed in the proof of
Lemma~\ref{lem:self-normalized-moment}. Markov's inequality gives
\begin{align*}
    \mathbb P_{A_0}\!\bigg(
        &\left\|\left(\overline C_T+T\eta I_d\right)^{-1/2}
        \overline\varepsilon_T^\top u\right\|_2^2 \\
        &\qquad>2\log\!\left[
            \frac{1}{\delta}
            \det\!\left(I_d+\eta^{-1}C_T\right)^{1/2}
        \right]
    \bigg)
    \leq\delta.
\end{align*}

Applying this bound with $\delta$ replaced by $\delta/9^d$ and taking a
union bound yields, with probability at least $1-\delta$,
\begin{equation}
    \max_{u\in\mathcal N}
    \left\|u^\top\mathcal M_{T,\eta}\right\|_2
    \leq \sqrt{2\ell_{T,\eta,\delta}},
\label{eq:net-data-driven-score-bound}
\end{equation}
where
\[
    \ell_{T,\eta,\delta}
    :=d\log 9+\log\!\left(\frac1\delta\right)
      +\frac12\log\det\!\left(I_d+\eta^{-1}C_T\right).
\]
For every matrix $H\in\mathbb R^{d\times d}$, the standard net inequality gives
\[
    \|H\|_{\mathrm{op}}
    \leq\frac{4}{3}\max_{u\in\mathcal N}\|u^\top H\|_2.
\]
Applying this to $M_{T,\eta}$ and using
\eqref{eq:net-data-driven-score-bound} shows that
\[
    2\|\varepsilon_TB^{-1}\|_{\mathrm{op}}
    =\frac{2}{\sqrt T}\|\mathcal M_{T,\eta}\|_{\mathrm{op}}
    \leq\lambda_{T,\eta,\delta},
\]
which proves \eqref{eq:data-driven-score-probability}.
\end{proof}

\begin{lemma}
\label{lem:spectral-trace-bound}
Under Assumption~\ref{ass:spectral-structure},
\begin{equation}
    \mathbb E_{A_0}\!\left[\operatorname{tr}(C_T)\right]
    \leq\frac{\overline\kappa^2dT}{2}.
\label{eq:spectral-trace-bound}
\end{equation}
\end{lemma}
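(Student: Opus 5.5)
We prove the bound by computing $\mathbb E_{A_0}[X_tX_t^\top]$ explicitly and using the spectral decomposition of Assumption~\ref{ass:spectral-structure}. Since $X_0=0$, the solution of \eqref{eq:ou-model} is
\[
X_t=\int_0^t e^{-(t-s)A_0}\,dW_s .
\]
By the Itô isometry,
\[
\mathbb E_{A_0}\|X_t\|_2^2=\int_0^t \|e^{-(t-s)A_0}\|_F^2\,ds=\int_0^t\|e^{-uA_0}\|_F^2\,du .
\]
Therefore
\[
\mathbb E_{A_0}[\operatorname{tr}(C_T)]=\frac1T\int_0^T\mathbb E_{A_0}\|X_t\|_2^2\,dt=\frac1T\int_0^T\int_0^t\|e^{-uA_0}\|_F^2\,du\,dt .
\]

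Next we bound the integrand. Write $A_0=P_0\Lambda_0P_0^{-1}$, so that
\[
e^{-uA_0}=P_0e^{-u\Lambda_0}P_0^{-1}.
\]
Using $\|XYZ\|_F\le\|X\|_{\mathrm{op}}\|Y\|_F\|Z\|_{\mathrm{op}}$ gives
\[
\|e^{-uA_0}\|_F^2\le \overline\kappa^2\|e^{-u\Lambda_0}\|_F^2=\overline\kappa^2\sum_{j=1}^d e^{-2u\theta_j}\le \overline\kappa^2 d,
\]
because $\theta_j\ge0$. Hence
\[
\mathbb E_{A_0}\|X_t\|_2^2\le \overline\kappa^2 d\,t .
\]
Averaging over $t\in[0,T]$ gives
\[
\frac1T\int_0^T \overline\kappa^2dt\,dt=\frac{\overline\kappa^2dT}{2},
\]
which is exactly \eqref{eq:spectral-trace-bound}.

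There is no real obstacle. The only point requiring care is the application of Fubini/Tonelli to exchange expectation and time integration, which is justified because the integrand is nonnegative. One should also note that the bound $e^{-2u\theta_j}\le 1$ is where the possibly zero eigenvalues (the non-stable directions) enter. This is why the bound grows linearly in $T$, rather than staying bounded as it would in the ergodic case.
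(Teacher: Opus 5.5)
Your proof is correct and is essentially the paper's argument: write $X_t=\int_0^t e^{-(t-s)A_0}\,dW_s$, apply the It\^o isometry, bound $\|e^{-uA_0}\|_F^2\le\overline\kappa^2 d$ using the diagonalization with $\theta_j\ge 0$, and integrate. The only cosmetic difference is the order of the norm bounds. The paper first bounds $\|e^{-uA_0}\|_{\mathrm{op}}\le\overline\kappa$ and then passes to the Frobenius norm via the factor $d$, whereas you apply $\|XYZ\|_F\le\|X\|_{\mathrm{op}}\|Y\|_F\|Z\|_{\mathrm{op}}$ directly to $P_0e^{-u\Lambda_0}P_0^{-1}$.
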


\begin{proof}
Assumption~\ref{ass:spectral-structure} yields
\[
    \|\exp(-tA_0)\|_{\mathrm{op}}
    \leq
    \|P_0\|_{\mathrm{op}}
    \|\exp(-t\Lambda_0)\|_{\mathrm{op}}
    \|P_0^{-1}\|_{\mathrm{op}}
    \leq\overline\kappa,
    \qquad t\geq0.
\]
Since $X_0=0$,
\[
    X_t=\int_0^t \exp(-(t-s)A_0)\,dW_s.
\]
It\^o's isometry therefore gives
\begin{align*}
    \mathbb E_{A_0}\!\left[\operatorname{tr}(C_T)\right]
    &=\frac1T\int_0^T
      \int_0^t
      \|\exp(-(t-s)A_0)\|_F^2\,ds\,dt \\
    &\leq\frac1T\int_0^T\int_0^t
      d\overline\kappa^2\,ds\,dt
    =\frac{\overline\kappa^2dT}{2}.
\end{align*}
\end{proof}

\begin{proof}[Proof of Proposition~\ref{prop:deterministic-score-level}]
Let
\[
    \mathcal D_{T,\delta}
    :=\left\{
        \operatorname{tr}(C_T)
        \leq\frac{\overline\kappa^2dT}{\delta}
    \right\}.
\]
By Lemma~\ref{lem:spectral-trace-bound} and Markov's inequality,
\[
    \mathbb P_{A_0}\!\left(\mathcal D_{T,\delta}^c\right)
    \leq\frac{\delta}{2}.
\]
For every positive-semidefinite matrix $H$, the arithmetic-geometric mean
inequality yields
\[
    \det(I_d+H)
    \leq\left(1+\frac{\operatorname{tr}(H)}{d}\right)^d.
\]
Hence, on $\mathcal D_{T,\delta}$,
\begin{equation}
    \log\det\!\left(I_d+\eta^{-1}C_T\right)
    \leq
    d\log\!\left(
        1+\frac{\overline\kappa^2T}{\eta\delta}
    \right).
\label{eq:deterministic-logdet-envelope}
\end{equation}
Combining \eqref{eq:data-driven-lambda} with \eqref{eq:deterministic-logdet-envelope},
we obtain, on $\mathcal D_{T,\delta}$,
\[
\lambda_{T,\eta,\delta/2}
\le \overline\lambda_{T,\eta,\delta}.
\]
Moreover, Proposition \ref{prop:data-driven-score-control}, applied with confidence level $\delta/2$,
gives
\[
\mathbb P_{A_0}\!\left(
\mathcal S_T(\lambda_{T,\eta,\delta/2},\eta)^c
\right)\le \frac{\delta}{2}.
\]
Consequently,
\begin{align*}
    \mathbb P_{A_0}\!\left(
        \mathcal S_T(\overline\lambda_{T,\eta,\delta},\eta)^c
    \right)
    &\leq
    \mathbb P_{A_0}\!\left(
        \mathcal S_T(\lambda_{T,\eta,\delta/2},\eta)^c
    \right)
    +\mathbb P_{A_0}\!\left(\mathcal D_{T,\delta}^c\right) \\
    &\leq\frac\delta2+\frac\delta2
    =\delta.
\end{align*}
\end{proof}


\subsection{A deterministic nuclear-norm inequality}
\label{subsec:deterministic-nuclear-inequality}

This and the subsequent subsections present auxiliary results, which are necessary to prove Theorem \ref{thm:oracle-inequalities}.
The next lemma is a deterministic oracle inequality for nuclear-norm regularized
matrix denoising. 

\begin{lemma}
\label{lem:nuclear-denoising}
Let $\Theta,G\in\mathbb M_d$, let $\lambda>0$, and define
\[
    \widehat\Theta
    \in\underset{U\in\mathbb M_d}{\operatorname{argmin}}
    \left\{
        \frac12\|U-(\Theta+G)\|_F^2+\lambda\|U\|_*
    \right\}.
\]
If $\|G\|_{\mathrm{op}}\leq\lambda/2$, then, for every
$H\in\mathbb M_d$ with $\operatorname{rank}(H)\leq s$,
\begin{equation}
    \|\widehat\Theta-\Theta\|_F^2
    \leq
    2\|H-\Theta\|_F^2+36\lambda^2s.
\label{eq:nuclear-denoising-oracle}
\end{equation}
\end{lemma}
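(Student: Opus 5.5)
We compare $\widehat\Theta$ with a fixed competitor $H$ of rank at most $s$, working with the basic inequality for the strongly convex objective rather than with the explicit singular-value soft-thresholding formula. Write $Y:=\Theta+G$ and $\Delta:=\widehat\Theta-H$. The objective $F(U)=\frac12\|U-Y\|_F^2+\lambda\|U\|_*$ is $1$-strongly convex, so the minimizer satisfies $F(H)-F(\widehat\Theta)\ge\frac12\|\widehat\Theta-H\|_F^2$. Expanding the quadratic terms gives
\[
\tfrac12\|\widehat\Theta-\Theta\|_F^2+\tfrac12\|\Delta\|_F^2
\le \tfrac12\|H-\Theta\|_F^2+\langle G,\Delta\rangle_F+\lambda\big(\|H\|_*-\|\widehat\Theta\|_*\big).
\]
Here we use $\|U-Y\|_F^2=\|U-\Theta\|_F^2-2\langle G,U-\Theta\rangle_F+\|G\|_F^2$, so the $\|G\|_F^2$ terms cancel and only $\langle G,\widehat\Theta-H\rangle_F$ survives.

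The stochastic term is handled by trace duality and the assumption $\|G\|_{\mathrm{op}}\le\lambda/2$:
\[
\langle G,\Delta\rangle_F\le\tfrac{\lambda}{2}\|\Delta\|_*.
\]
For the penalty difference we use the standard decomposability of the nuclear norm. Let $H=U_1\Sigma V_1^\top$ be a compact SVD with $U_1,V_1$ having $s$ columns. Let $\mathcal P$ be the projection $M\mapsto P_{U}^\perp M P_V^\perp$ onto matrices whose row and column spaces are orthogonal to those of $H$, and let $\mathcal P^\perp=\mathrm{Id}-\mathcal P$. Then $\|H+\Delta\|_*\ge\|H\|_*+\|\mathcal P\Delta\|_*-\|\mathcal P^\perp\Delta\|_*$. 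Moreover, $\mathcal P^\perp\Delta$ has rank at most $2s$, so $\|\mathcal P^\perp\Delta\|_*\le\sqrt{2s}\,\|\mathcal P^\perp\Delta\|_F\le\sqrt{2s}\,\|\Delta\|_F$. Combining these facts, the terms in $\|\mathcal P\Delta\|_*$ appear with total coefficient $\frac\lambda2-\lambda\le0$ and can be dropped:
\[
\langle G,\Delta\rangle_F+\lambda(\|H\|_*-\|\widehat\Theta\|_*)
\le \tfrac\lambda2\big(\|\mathcal P\Delta\|_*+\|\mathcal P^\perp\Delta\|_*\big)+\lambda\|\mathcal P^\perp\Delta\|_*-\lambda\|\mathcal P\Delta\|_*
\le \tfrac{3\lambda}{2}\sqrt{2s}\,\|\Delta\|_F .
\]

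It then remains to absorb the right-hand side. By Young's inequality,
\[
\tfrac{3}{2}\lambda\sqrt{2s}\,\|\Delta\|_F\le\tfrac12\|\Delta\|_F^2+\tfrac98\cdot 2\lambda^2 s,
\]
which cancels the $\frac12\|\Delta\|_F^2$ on the left. Multiplying by $2$ yields
\[
\|\widehat\Theta-\Theta\|_F^2\le\|H-\Theta\|_F^2+\tfrac92\lambda^2 s,
\]
which is stronger than \eqref{eq:nuclear-denoising-oracle}, since $1\le2$ and $\frac92\le36$. If one prefers to avoid the strong-convexity term, the argument still closes: use only $F(\widehat\Theta)\le F(H)$ and bound $\|\Delta\|_F\le\|\widehat\Theta-\Theta\|_F+\|H-\Theta\|_F$. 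Young's inequality then gives constants of the stated form $2$ and $36$, which is presumably how the authors obtained their looser numbers.

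The main obstacle is the decomposability step, namely the lower bound on $\|H+\Delta\|_*$ and the rank-$2s$ bound for $\mathcal P^\perp\Delta$. I would cite it as the standard lemma of Negahban and Wainwright; it can also be proved directly by choosing the subgradient $U_1V_1^\top+W$ of the nuclear norm at $H$, where $W$ is a partial isometry aligned with $\mathcal P\Delta$ and has $\|W\|_{\mathrm{op}}\le1$. The remaining steps are routine algebra.
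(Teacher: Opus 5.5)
Your proof is correct. It uses the same two core ingredients as the paper: trace duality with $\|G\|_{\mathrm{op}}\le\lambda/2$, and decomposability of the nuclear norm around $H$ with the rank-$2s$ bound on the tangent-space component. It departs from the paper at the step where the cross term is absorbed.

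The paper starts only from the basic inequality $F(\widehat\Theta)\le F(H)$. After decomposability it is left with $\tfrac{3\lambda}{2}\|\mathcal P_{\mathcal T_H}(\widehat\Theta-H)\|_*$ on the right, but only $\tfrac12\|\widehat\Theta-\Theta\|_F^2$ on the left. It must therefore use the triangle inequality $\|\widehat\Theta-H\|_F\le\|\widehat\Theta-\Theta\|_F+\|H-\Theta\|_F$ and two applications of Young's inequality. That is exactly what produces the factor $2$ in front of $\|H-\Theta\|_F^2$ and the constant $36$.

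You instead use the $1$-strong convexity margin $F(H)\ge F(\widehat\Theta)+\tfrac12\|H-\widehat\Theta\|_F^2$, which follows from $Y-\widehat\Theta\in\lambda\,\partial\|\widehat\Theta\|_*$. The extra $\tfrac12\|\widehat\Theta-H\|_F^2$ on the left absorbs the term linear in $\|\widehat\Theta-H\|_F$ directly, with no triangle inequality. This gives the sharp oracle inequality $\|\widehat\Theta-\Theta\|_F^2\le\|H-\Theta\|_F^2+\tfrac92\lambda^2 s$, with leading constant one, which implies the stated bound. Carried through the proof of Theorem~\ref{thm:oracle-inequalities}, it would improve $(C_1,C_2)$ from $(4,72)$ to $(2,9)$.

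The paper's route is marginally more elementary, since it only compares objective values and never uses the subgradient characterization of the minimizer. Yours is the standard sharp-oracle argument and costs essentially nothing extra.

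A cosmetic point: if $\operatorname{rank}(H)=k<s$, the compact SVD has $k$ rather than $s$ columns. This only improves the rank bound on $\mathcal P^\perp\Delta$.
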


\begin{proof}
Set $\Delta:=\widehat\Theta-\Theta$, $D:=\widehat\Theta-H$, and
$e:=H-\Theta$. The optimality of $\widehat\Theta$ gives
\begin{equation}
    \frac12\|\Delta\|_F^2
    \leq
    \frac12\|e\|_F^2
    +\langle G,D\rangle_F
    +\lambda\big(\|H\|_*-\|\widehat\Theta\|_*\big).
\label{eq:basic-nuclear-inequality}
\end{equation}
Let $H=U\Sigma V^\top$ be a compact singular-value decomposition, with
$k:=\operatorname{rank}(H)\leq s$, and define the tangent space
\[
    \mathcal T_H
    :=\big\{UK^\top+LV^\top:K,L\in\mathbb R^{d\times k}\big\}.
\]
Let
$P_{\mathcal T_H}$ and $P_{\mathcal T_H^\perp}$ denote the
Frobenius-orthogonal projections onto $\mathcal T_H$ and its orthogonal
complement, respectively. Since
\[
P_{\mathcal T_H^\perp}D
=
(I_d-UU^\top)D(I_d-VV^\top),
\]
the matrices $H=U\Sigma V^\top$ and
$P_{\mathcal T_H^\perp}D$ have mutually orthogonal row and column
spaces. Hence
\[
\|H+P_{\mathcal T_H^\perp}D\|_*
=
\|H\|_*+\|P_{\mathcal T_H^\perp}D\|_*.
\]
Using $\widehat\Theta=H+P_{\mathcal T_H}D+
P_{\mathcal T_H^\perp}D$ and the triangle inequality, we obtain

\begin{equation}
    \|H\|_*-\|\widehat\Theta\|_*
    \leq
    \|\mathcal P_{\mathcal T_H}D\|_*
    -\|\mathcal P_{\mathcal T_H^\perp}D\|_*.
\label{eq:nuclear-decomposability}
\end{equation}
Moreover, trace duality and the score condition imply
\[
    \langle G,D\rangle_F
    \leq\frac\lambda2\|D\|_*
    \leq\frac\lambda2
    \left(
        \|\mathcal P_{\mathcal T_H}D\|_*
        +\|\mathcal P_{\mathcal T_H^\perp}D\|_*
    \right).
\]
Substituting these bounds into \eqref{eq:basic-nuclear-inequality} gives
\[
    \frac12\|\Delta\|_F^2
    \leq
    \frac12\|e\|_F^2
    +\frac{3\lambda}{2}\|\mathcal P_{\mathcal T_H}D\|_*.
\]
Since $\operatorname{rank}(\mathcal P_{\mathcal T_H}D)\leq2s$,
\[
    \|\mathcal P_{\mathcal T_H}D\|_*
    \leq\sqrt{2s}\|D\|_F
    \leq\sqrt{2s}\big(\|\Delta\|_F+\|e\|_F\big).
\]
Let $x:=\|\Delta\|_F$, $b:=\|e\|_F$, and
$a:=3\lambda\sqrt{2s}$. Multiplying the preceding inequality by two yields
\[
    x^2\leq b^2+a(x+b).
\]
Using $ax\leq x^2/4+a^2$ and $ab\leq b^2/2+a^2/2$, we obtain
\[
    x^2\leq b^2+\frac{x^2}{4}+a^2+\frac{b^2}{2}+\frac{a^2}{2},
\]
and hence $x^2\leq2b^2+2a^2$. Since $2a^2=36\lambda^2s$, this proves
\eqref{eq:nuclear-denoising-oracle}.
\end{proof}

\subsection{Approximation after ridge regularization}
\label{subsec:ridge-approximation}

Define the ridge-shrunken target
\begin{equation}
    A_{0,\eta}:=A_0C_T(C_T+\eta I_d)^{-1}
    =A_0C_TB^{-2}.
\label{eq:ridge-shrunken-target}
\end{equation}

\begin{lemma}
\label{lem:ridge-approximation}
For every $A\in\mathbb M_d$, let
\[
    A_\eta:=AC_T(C_T+\eta I_d)^{-1}.
\]
Then $\operatorname{rank}(A_\eta)\leq\operatorname{rank}(A)$ and
\begin{equation}
    \big\|(A_\eta-A_{0,\eta})B\big\|_F
    \leq\big\|(A-A_0)B\big\|_F.
\label{eq:ridge-approximation-bound}
\end{equation}
Moreover,
\begin{equation}
    \big\|(A_{0,\eta}-A_0)B\big\|_F
    =\eta\big\|A_0B^{-1}\big\|_F.
\label{eq:ridge-bias-identity}
\end{equation}
\end{lemma}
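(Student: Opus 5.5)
The argument is a short linear-algebra calculation built on the identity $C_T(C_T+\eta I_d)^{-1}=I_d-\eta B^{-2}$. This holds because $C_T$ and $B^{-2}=(C_T+\eta I_d)^{-1}$ commute, both being functions of the symmetric matrix $C_T$.

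For the rank claim, $A_\eta=A\,C_TB^{-2}$ is a product with $A$ as a left factor. Hence $\operatorname{rank}(A_\eta)\leq\operatorname{rank}(A)$ follows from submultiplicativity of rank.

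For \eqref{eq:ridge-approximation-bound}, note that $(A_\eta-A_{0,\eta})B=(A-A_0)C_TB^{-2}B=(A-A_0)C_TB^{-1}$. Since $C_T$ and $B$ commute, $C_TB^{-1}=B\,(C_TB^{-2})$, so
\[
(A_\eta-A_{0,\eta})B=(A-A_0)B\,K,\qquad K:=C_T(C_T+\eta I_d)^{-1}.
\]
The matrix $K$ is symmetric and diagonal in the eigenbasis of $C_T$, with eigenvalues $\mu_j/(\mu_j+\eta)\in[0,1)$, where $\mu_j\geq0$ are the eigenvalues of $C_T$. Thus $\|K\|_{\mathrm{op}}\leq1$. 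Applying $\|MK\|_F\leq\|M\|_F\|K\|_{\mathrm{op}}$ with $M=(A-A_0)B$ gives the inequality.

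For \eqref{eq:ridge-bias-identity}, compute
\[
A_{0,\eta}-A_0=A_0(C_TB^{-2}-I_d)=A_0(C_T-C_T-\eta I_d)B^{-2}=-\eta A_0B^{-2}.
\]
Multiplying on the right by $B$ gives $(A_{0,\eta}-A_0)B=-\eta A_0B^{-1}$. Taking Frobenius norms yields the identity exactly.

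There is no real obstacle. The only point needing care is to justify the commutation of $C_T$, $B$ and $B^{-1}$: all are spectral functions of the same symmetric positive semidefinite matrix, so they are simultaneously diagonalizable. This is what makes $K$ symmetric with operator norm at most one, and what lets $B$ be moved past $C_TB^{-2}$.
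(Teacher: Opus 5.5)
Your proposal is correct and follows essentially the same route as the paper. Both arguments write $(A_\eta-A_{0,\eta})B=(A-A_0)BK$ with $K=C_T(C_T+\eta I_d)^{-1}=I_d-\eta B^{-2}$ commuting with $B$ and satisfying $\|K\|_{\mathrm{op}}\leq 1$, and both obtain the bias identity from $(K-I_d)B=-\eta B^{-1}$.
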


\begin{proof}
Set
\[
    Q_{T,\eta}:=C_T(C_T+\eta I_d)^{-1}=I_d-\eta B^{-2}.
\]
Then $0\preceq Q_{T,\eta}\preceq I_d$, and $Q_{T,\eta}$ commutes with $B$.
Since $A_\eta=AQ_{T,\eta}$ and $A_{0,\eta}=A_0Q_{T,\eta}$,
\[
    (A_\eta-A_{0,\eta})B
    =(A-A_0)BQ_{T,\eta}.
\]
The operator norm of $Q_{T,\eta}$ is at most one, which proves
\eqref{eq:ridge-approximation-bound}. The rank assertion is immediate. Finally,
\[
    (A_{0,\eta}-A_0)B
    =A_0(Q_{T,\eta}-I_d)B
    =-\eta A_0B^{-1},
\]
which proves \eqref{eq:ridge-bias-identity}.
\end{proof}

\subsection{Proofs of the oracle inequalities}
\label{subsec:proofs-oracle}

\begin{proof}[Proof of Theorem~\ref{thm:oracle-inequalities}]
Fix $r\in\{0,\ldots,d\}$. Since $B$ is invertible, the minimizer in
\eqref{eq:weighted-approximation-error} is
\[
    A_r^\star:=(A_0B)_{[r]}B^{-1},
\]
so that $\operatorname{rank}(A_r^\star)\leq r$ and
\begin{equation}
    \big\|(A_r^\star-A_0)B\big\|_F^2
    =\mathfrak a_{r,T,\eta}(A_0).
\label{eq:best-weighted-approximator}
\end{equation}
Let
\[
    \widehat\Theta:=\widehat A_{\lambda,\eta}B,
    \qquad
    \Theta_{0,\eta}:=A_{0,\eta}B=A_0C_TB^{-1},
    \qquad
    G_{T,\eta}:=-\varepsilon_TB^{-1}.
\]
The identities in Section~\ref{sec:model-estimator} imply
\[
    Z_TB^{-1}=\Theta_{0,\eta}+G_{T,\eta}.
\]
Moreover, the change of variables $\Theta=AB$ transforms the criterion in
\eqref{eq:wnee}, up to an additive constant not depending on $\Theta$, into
\[
    \frac12\big\|\Theta-Z_TB^{-1}\big\|_F^2+\lambda\|\Theta\|_*.
\]
Thus, on $\mathcal S_T(\lambda,\eta)$,
$\|G_{T,\eta}\|_{\mathrm{op}}\leq\lambda/2$ and
Lemma~\ref{lem:nuclear-denoising} applies with
\[
    \Theta=\Theta_{0,\eta},
    \qquad
    H:=A_{r,\eta}^\star B,
    \qquad
    A_{r,\eta}^\star:=A_r^\star C_T(C_T+\eta I_d)^{-1}.
\]
By Lemma~\ref{lem:ridge-approximation} and
\eqref{eq:best-weighted-approximator},
\[
    \big\|H-\Theta_{0,\eta}\big\|_F^2
    \leq\mathfrak a_{r,T,\eta}(A_0).
\]
Hence,
\begin{equation}
    \big\|\widehat\Theta-\Theta_{0,\eta}\big\|_F^2
    \leq
    2\mathfrak a_{r,T,\eta}(A_0)+36\lambda^2r.
\label{eq:oracle-ridge-target}
\end{equation}
The ridge-bias identity in Lemma~\ref{lem:ridge-approximation} and the inequality
$\|U+V\|_F^2\leq2\|U\|_F^2+2\|V\|_F^2$ give
\begin{align*}
    \big\|\big(\widehat A_{\lambda,\eta}-A_0\big)B\big\|_F^2
    &\leq
    2\big\|\widehat\Theta-\Theta_{0,\eta}\big\|_F^2
    +2\big\|\big(A_{0,\eta}-A_0\big)B\big\|_F^2 \\
    &\leq
    4\mathfrak a_{r,T,\eta}(A_0)
    +72\lambda^2r
    +2\eta^2\big\|A_0B^{-1}\big\|_F^2.
\end{align*}
This proves \eqref{eq:weighted-oracle-inequality} with
$(C_1,C_2,C_3)=(4,72,2)$. On the event $\mathcal R_T(c_0)$,
\[
    c_0\big\|\widehat A_{\lambda,\eta}-A_0\big\|_F^2
    \leq
    \big\|\big(\widehat A_{\lambda,\eta}-A_0\big)B\big\|_F^2,
\]
and 
\[
\|A_0B_{T,\eta}^{-1}\|_F^2
\le c_0^{-1}\|A_0\|_F^2.
\]
which proves \eqref{eq:frobenius-oracle-inequality}.
\end{proof}

\begin{proof}[Proof of Corollary~\ref{cor:exact-low-rank-oracle}]
If $\operatorname{rank}(A_0)\leq r$, then
$A_0\in\mathcal M_r$ and consequently
$\mathfrak a_{r,T,\eta}(A_0)=0$. The result follows immediately from the
Frobenius-norm inequality in Theorem~\ref{thm:oracle-inequalities}.
\end{proof}


\section{Exact Low-Rank Results}
\label{sec:exact-low-rank}


This section develops explicit non-asymptotic results for the exactly low-rank symmetric Ornstein–Uhlenbeck model. In particular, we derive a refined deterministic score level, verify the empirical-curvature condition of Assumption \ref{ass:empirical-curvature}, and combine these results to obtain an explicit Frobenius-norm error bound for the WNEE.

\subsection{Exact low-rank regime and explicit bounds}
\label{subsec:exact-low-rank-regime}

\begin{assumption}
\label{ass:exact-low-rank}
Let $r=r_T$ and $m=m_T:=d_T-r_T$, where $1\leq r_T\leq d_T-1$. Suppose that
\eqref{eq:ou-model} holds with $X_0=0$ and
\begin{equation}
    A_0
    =P
    \begin{pmatrix}
        \operatorname{diag}(a_1,\ldots,a_r) & 0\\
        0 & 0_{m\times m}
    \end{pmatrix}
    P^\top,
\label{eq:exact-low-rank-drift}
\end{equation}
where $P\in\mathbb R^{d\times d}$ is orthogonal. There exist constants
\begin{equation}
    0<a_-\leq a_+<\infty
\label{eq:exact-low-rank-spectral-bounds}
\end{equation}
that do not depend on $d$ or $T$ such that
\[
    a_-\leq a_i\leq a_+,
    \qquad i=1,\ldots,r.
\]
\end{assumption}

\noindent
Assumption~\ref{ass:exact-low-rank} is a subclass of
Assumption~\ref{ass:spectral-structure}, with $\overline\kappa=1$ and
$\overline a=a_+$. It is more restrictive because it assumes that $A_0$ is
symmetric positive semidefinite and that its nonzero eigenvalues are uniformly
bounded away from zero. These additional properties yield an orthogonal
decomposition into stable Ornstein-Uhlenbeck and Brownian coordinates, which
is the basis of the non-asymptotic argument below.


Proposition \ref{prop:deterministic-score-level} provides a deterministic score calibration under the general
spectral framework of Assumption \ref{ass:spectral-structure}. That bound is obtained by controlling
the empirical covariance through its trace and therefore treats all coordinate
directions uniformly. Under Assumption \ref{ass:exact-low-rank}, the orthogonal decomposition into
stable Ornstein-Uhlenbeck and Brownian coordinates permits a more block-sensitive analysis.

The results below proceed in three steps. First, Proposition \ref{prop:general-score-control} bounds the
score tail probability in terms of a determinant moment of the empirical
covariance. Proposition \ref{prop:exact-low-rank-score-control} then controls this moment separately on the
stable and Brownian blocks, yielding a deterministic tuning level that reflects
their distinct empirical scales. Next, Theorem \ref{thm:exact-low-rank-curvature} verifies the empirical-curvature
condition by combining lower bounds for the two diagonal blocks with control of
the cross block. Finally, Corollary \ref{cor:exact-low-rank-rate} combines these ingredients with the
oracle inequality to obtain an explicit Frobenius-norm error bound.

\begin{proposition}
\label{prop:general-score-control}
For every $\eta>0$, $z>0$ and $\rho\in(0,1)$,
\begin{equation}
\begin{aligned}
    \mathbb P_{A_0}\!\left(
        \|\mathcal M_{T,\eta}\|_{\mathrm{op}}>z
    \right)
    \leq{}&
    \exp\!\left(
        -\frac{(1-\rho)^2z^2}{4}
        +d\log\!\left(1+\frac2\rho\right)
    \right) \\
    &\quad\times
    \left\{
        \mathbb E_{A_0}\!\left[
            \det\!\left(I_d+\eta^{-1}C_T\right)^{1/2}
        \right]
    \right\}^{1/2}.
\end{aligned}
\label{eq:general-operator-score-bound}
\end{equation}
Equivalently,
\begin{equation}
\begin{aligned}
    \mathbb P_{A_0}\!\left(
        \mathcal S_T(\lambda,\eta)^c
    \right)
    \leq{}&
    \exp\!\left(
        -\frac{(1-\rho)^2T\lambda^2}{16}
        +d\log\!\left(1+\frac2\rho\right)
    \right) \\
    &\quad\times
    \left\{
        \mathbb E_{A_0}\!\left[
            \det\!\left(I_d+\eta^{-1}C_T\right)^{1/2}
        \right]
    \right\}^{1/2}.
\end{aligned}
\label{eq:general-score-event-bound}
\end{equation}
\end{proposition}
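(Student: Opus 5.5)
The argument combines a covering argument on the unit sphere with the self-normalized exponential moment of Lemma~\ref{lem:self-normalized-moment}, evaluated at $a=T\eta$, so that $V_T+aI_d=\overline C_T+T\eta I_d$ and $I_d+a^{-1}V_T=I_d+\eta^{-1}C_T$.

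\textbf{Step 1 (fixed direction).} Fix $u$ with $\|u\|_2=1$. By construction,
\[
\|u^\top\mathcal M_{T,\eta}\|_2^2=\xi_T(u)^\top(\overline C_T+T\eta I_d)^{-1}\xi_T(u),
\]
since $u^\top\overline\varepsilon_T=\xi_T(u)^\top$. Markov's inequality applied to $\exp(\tfrac14\|u^\top\mathcal M_{T,\eta}\|_2^2)$, together with \eqref{eq:self-normalized-moment}, gives for every $y>0$
\[
\mathbb P_{A_0}\big(\|u^\top\mathcal M_{T,\eta}\|_2>y\big)\le e^{-y^2/4}\Big\{\mathbb E_{A_0}\big[\det(I_d+\eta^{-1}C_T)^{1/2}\big]\Big\}^{1/2}.
\]

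\textbf{Step 2 (net).} Let $\mathcal N_\rho$ be a deterministic $\rho$-net of the unit sphere. The standard volumetric bound gives $|\mathcal N_\rho|\le(1+2/\rho)^d$. For any matrix $H$, choose a unit vector $u$ with $\|u^\top H\|_2=\|H\|_{\mathrm{op}}$, and pick $v\in\mathcal N_\rho$ with $\|u-v\|_2\le\rho$. Then
\[
\|H\|_{\mathrm{op}}\le\|v^\top H\|_2+\rho\|H\|_{\mathrm{op}},
\]
so $\|H\|_{\mathrm{op}}\le(1-\rho)^{-1}\max_{v\in\mathcal N_\rho}\|v^\top H\|_2$. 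Consequently,
\[
\{\|\mathcal M_{T,\eta}\|_{\mathrm{op}}>z\}\subseteq\bigcup_{v\in\mathcal N_\rho}\{\|v^\top\mathcal M_{T,\eta}\|_2>(1-\rho)z\}.
\]
A union bound with Step 1 at $y=(1-\rho)z$ yields \eqref{eq:general-operator-score-bound}, because $|\mathcal N_\rho|=\exp(d\log|\mathcal N_\rho|^{1/d})\le\exp\big(d\log(1+2/\rho)\big)$.

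\textbf{Step 3 (equivalent form).} By \eqref{eq:score-event-unnormalized}, $\mathcal S_T(\lambda,\eta)^c=\{\|\mathcal M_{T,\eta}\|_{\mathrm{op}}>\sqrt T\lambda/2\}$. Substituting $z=\sqrt T\lambda/2$ gives $z^2/4=T\lambda^2/16$, which is \eqref{eq:general-score-event-bound}.

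Every step is routine. The only point needing care is that Lemma~\ref{lem:self-normalized-moment} holds for a deterministic $u$, so the net must be fixed before applying the union bound. The right-hand side is allowed to be infinite, in which case the bound is trivial.
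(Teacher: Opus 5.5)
Your proof is correct and follows the paper's argument step by step. You apply Lemma~\ref{lem:self-normalized-moment} with $a=T\eta$ and Markov's inequality for each fixed direction, use the net inequality $\|H\|_{\mathrm{op}}\le(1-\rho)^{-1}\max_{u\in\mathcal N_\rho}\|u^\top H\|_2$, take a union bound over a deterministic $\rho$-net of cardinality at most $(1+2/\rho)^d$, and obtain the second form by substituting $z=\sqrt T\lambda/2$ via \eqref{eq:score-event-unnormalized}.
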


\begin{proof}
Let $\mathcal N_\rho$ be a $\rho$-net of the Euclidean unit sphere
$\mathbb S^{d-1}$ with cardinality
$|\mathcal N_\rho|\leq(1+2/\rho)^d$. For $u\in\mathcal N_\rho$, apply
Lemma~\ref{lem:self-normalized-moment} with $a=T\eta$ and use Markov's
inequality to obtain, for every $x>0$,
\begin{align*}
    \mathbb P_{A_0}\!\left(
        \|u^\top\mathcal M_{T,\eta}\|_2>x
    \right)
    &\leq
    \exp\!\left(-\frac{x^2}{4}\right)
    \left\{
        \mathbb E_{A_0}\!\left[
            \det\!\left(
                I_d+(T\eta)^{-1}\overline C_T
            \right)^{1/2}
        \right]
    \right\}^{1/2} \\
    &=
    \exp\!\left(-\frac{x^2}{4}\right)
    \left\{
        \mathbb E_{A_0}\!\left[
            \det\!\left(I_d+\eta^{-1}C_T\right)^{1/2}
        \right]
    \right\}^{1/2}.
\end{align*}
For every $d\times d$ matrix $H$,
\[
    \|H\|_{\mathrm{op}}
    \leq \frac{1}{1-\rho}
    \max_{u\in\mathcal N_\rho}\|u^\top H\|_2.
\]
Applying this inequality to $H=\mathcal M_{T,\eta}$, followed by a union
bound over $\mathcal N_\rho$, proves
\eqref{eq:general-operator-score-bound}. Equation
\eqref{eq:general-score-event-bound} follows from
\eqref{eq:score-event-unnormalized}.
\end{proof}

\begin{proposition}
\label{prop:exact-low-rank-score-control}
Suppose that Assumption~\ref{ass:exact-low-rank} holds. Then, for every
$\eta>0$ and $\rho\in(0,1)$,
\begin{equation}
\begin{aligned}
    \mathbb P_{A_0}\!\left(
        \mathcal S_T(\lambda,\eta)^c
    \right)
    \leq{}&
    \exp\!\Bigg(
        -\frac{(1-\rho)^2T\lambda^2}{16}
        +d\log\!\left(1+\frac2\rho\right) \\
    &\qquad\quad
        +\frac r4\log\!\left(1+\frac{1}{2a_-\eta}\right)
        +\frac{d-r}{4}\log\!\left(1+\frac{T}{2\eta}\right)
    \Bigg).
\end{aligned}
\label{eq:sharpened-score-event-bound}
\end{equation}
\end{proposition}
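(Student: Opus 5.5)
The plan is to start from Proposition~\ref{prop:general-score-control}, specifically \eqref{eq:general-score-event-bound}. It then suffices to show
\[
    \mathbb E_{A_0}\!\left[\det\!\left(I_d+\eta^{-1}C_T\right)^{1/2}\right]
    \leq
    \left(1+\frac{1}{2a_-\eta}\right)^{r/2}
    \left(1+\frac{T}{2\eta}\right)^{(d-r)/2}.
\]
Taking the square root of this moment bound and moving it inside the exponential produces exactly the two additional terms $\frac r4\log(1+\frac{1}{2a_-\eta})$ and $\frac{d-r}{4}\log(1+\frac{T}{2\eta})$ in \eqref{eq:sharpened-score-event-bound}.

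To bound the determinant moment, I first rotate coordinates. Set $Y_t:=P^\top X_t$ and $\widetilde W_t:=P^\top W_t$. Since $P$ is orthogonal, $\widetilde W$ is again a standard Brownian motion, and under Assumption~\ref{ass:exact-low-rank}
\[
    dY_t=-\operatorname{diag}(a_1,\ldots,a_r,0,\ldots,0)\,Y_t\,dt+d\widetilde W_t,
    \qquad Y_0=0.
\]
The coordinates $Y^{(i)}$ are therefore mutually independent. For $i\le r$, $Y^{(i)}$ is a scalar OU process with rate $a_i$; for $i>r$, $Y^{(i)}=\widetilde W^{(i)}$ is a Brownian motion. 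Moreover $C_T=P\,C_T^Y P^\top$ with $C_T^Y:=\frac1T\int_0^TY_tY_t^\top dt$, so $\det(I_d+\eta^{-1}C_T)=\det(I_d+\eta^{-1}C_T^Y)$.

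Next I apply Hadamard's inequality to the positive definite matrix $I_d+\eta^{-1}C_T^Y$:
\[
    \det(I_d+\eta^{-1}C_T^Y)\le\prod_{i=1}^d\bigl(1+\eta^{-1}(C_T^Y)_{ii}\bigr),
    \qquad (C_T^Y)_{ii}=\frac1T\int_0^T (Y^{(i)}_t)^2dt .
\]
Each factor depends only on the single coordinate $Y^{(i)}$, so independence lets the expectation of the product of square roots factorize. Jensen's inequality for the concave map $x\mapsto(1+x)^{1/2}$ then gives, for each $i$,
\[
    \mathbb E\bigl[(1+\eta^{-1}(C^Y_T)_{ii})^{1/2}\bigr]\le(1+\eta^{-1}\mathbb E (C^Y_T)_{ii})^{1/2}.
\]
The second moments are explicit. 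For $i\le r$,
\[
    \mathbb E (Y^{(i)}_t)^2=\frac{1-e^{-2a_it}}{2a_i}\le\frac1{2a_-},
\]
hence $\mathbb E(C^Y_T)_{ii}\le 1/(2a_-)$. For $i>r$, $\mathbb E (Y^{(i)}_t)^2=t$, hence $\mathbb E(C^Y_T)_{ii}=T/2$. Taking the product over $i$ yields the displayed moment bound, and substituting it into \eqref{eq:general-score-event-bound} completes the argument.

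The only step needing care is decoupling the determinant across coordinates. Handled naively, the cross block between the stable and Brownian coordinates could contribute. Hadamard's inequality removes all off-diagonal entries at once, and independence of the rotated coordinates (which uses that $P$ is orthogonal and $A_0$ is symmetric) then makes the expectation factorize. Once that is in place, everything else is an elementary Itô-isometry computation.
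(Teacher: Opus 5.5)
Your proposal is correct and follows essentially the same route as the paper: it reduces via \eqref{eq:general-score-event-bound} to the determinant moment, rotates by $P$, applies Hadamard's inequality, factorizes by independence of the coordinates, and uses Jensen with the explicit second moments bounded by $1/(2a_-)$ and equal to $T/2$. The only cosmetic difference is how the rotation is justified: you use invariance of $\det(I_d+\eta^{-1}C_T)$ under orthogonal conjugation, whereas the paper invokes invariance of the operator norm in the score event. Both justifications are valid.
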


\begin{proof}
An orthogonal change of coordinates preserves the operator norm in
\eqref{eq:score-event}. Under Assumption~\ref{ass:exact-low-rank}, we may
therefore assume without loss of generality that $P=I_d$. Then the coordinates
of $X$ are independent. For $i\leq r$, $X^i$ is an
Ornstein-Uhlenbeck coordinate started from zero with mean square
\[
    \mathbb E_{A_0}\!\left[(X_t^i)^2\right]
    =\frac{1-\exp(-2a_it)}{2a_i},
\]
whereas for $i>r$, $X^i$ is a Brownian motion. Consequently,
\begin{equation}
    \mathbb E_{A_0}[\overline C_T^{ii}]
    =\frac{T}{2a_i}
      -\frac{1-\exp(-2a_iT)}{4a_i^2}
    \leq\frac{T}{2a_-},
    \qquad i\leq r,
\label{eq:stable-diagonal-covariance-expectation}
\end{equation}
and
\begin{equation}
    \mathbb E_{A_0}[\overline C_T^{ii}]
    =\frac{T^2}{2},
    \qquad i>r.
\label{eq:brownian-diagonal-covariance-expectation}
\end{equation}
\noindent
Put $\beta:=T\eta$. By Hadamard's inequality, the independence of the diagonal
coordinates, and Jensen's inequality,
\begin{align*}
    \mathbb E_{A_0}\!\left[
        \det\!\left(I_d+\beta^{-1}\overline C_T\right)^{1/2}
    \right]
    &\leq
    \prod_{i=1}^d
    \mathbb E_{A_0}\!\left[
        \left(1+\beta^{-1}\overline C_T^{ii}\right)^{1/2}
    \right] \\
    &\leq
    \left(1+\frac{T}{2a_-\beta}\right)^{r/2}
    \left(1+\frac{T^2}{2\beta}\right)^{(d-r)/2}.
\end{align*}
Taking square roots, using $\beta=T\eta$, and substituting the resulting bound
into \eqref{eq:general-score-event-bound} proves
\eqref{eq:sharpened-score-event-bound}.
\end{proof}

\begin{corollary}
\label{cor:exact-low-rank-score-tuning}
Under Assumption~\ref{ass:exact-low-rank}, let $\eta>0$, $\delta\in(0,1)$ and
$\rho\in(0,1)$. If
\begin{equation}
\begin{aligned}
    \lambda^2
    \geq
    \frac{16}{(1-\rho)^2T}
    \Bigg[
        &\log\!\left(\frac1\delta\right)
        +d\log\!\left(1+\frac2\rho\right) \\
        &+\frac r4\log\!\left(1+\frac{1}{2a_-\eta}\right)
        +\frac{d-r}{4}\log\!\left(1+\frac{T}{2\eta}\right)
    \Bigg],
\end{aligned}
\label{eq:exact-low-rank-tuning-general-rho}
\end{equation}
then
\[
    \mathbb P_{A_0}\!\left(
        \mathcal S_T(\lambda,\eta)
    \right)
    \geq1-\delta.
\]
In particular, the explicit choice $\rho=1/2$ gives the sufficient condition
\begin{equation}
\begin{aligned}
    \lambda^2
    \geq
    \frac{64}{T}
    \Bigg[
        &\log\!\left(\frac1\delta\right)
        +d\log 5 \\
        &+\frac r4\log\!\left(1+\frac{1}{2a_-\eta}\right)
        +\frac{d-r}{4}\log\!\left(1+\frac{T}{2\eta}\right)
    \Bigg].
\end{aligned}
\label{eq:exact-low-rank-tuning-rho-half}
\end{equation}
\end{corollary}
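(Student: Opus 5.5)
The corollary follows directly from Proposition~\ref{prop:exact-low-rank-score-control}. The plan is to make the exponent in \eqref{eq:sharpened-score-event-bound} at most $\log\delta$.

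Fix $\eta>0$, $\delta\in(0,1)$ and $\rho\in(0,1)$. Abbreviate the bracketed quantity in \eqref{eq:exact-low-rank-tuning-general-rho} as $\log(1/\delta)+K$, where
\[
K:=d\log\!\left(1+\frac2\rho\right)+\frac r4\log\!\left(1+\frac{1}{2a_-\eta}\right)+\frac{d-r}{4}\log\!\left(1+\frac{T}{2\eta}\right).
\]
Proposition~\ref{prop:exact-low-rank-score-control} gives
\[
\mathbb P_{A_0}\big(\mathcal S_T(\lambda,\eta)^c\big)\le \exp\!\left(-\frac{(1-\rho)^2T\lambda^2}{16}+K\right).
\]
Multiply the assumed lower bound on $\lambda^2$ by $(1-\rho)^2T/16$. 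This gives $(1-\rho)^2T\lambda^2/16\ge \log(1/\delta)+K$. Hence the exponent is at most $-\log(1/\delta)$, the failure probability is at most $\delta$, and $\mathbb P_{A_0}(\mathcal S_T(\lambda,\eta))\ge 1-\delta$.

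For the special case, set $\rho=1/2$. Then $(1-\rho)^2=1/4$, so $16/((1-\rho)^2T)=64/T$, and $1+2/\rho=5$. Substituting these values into \eqref{eq:exact-low-rank-tuning-general-rho} yields exactly \eqref{eq:exact-low-rank-tuning-rho-half}. So \eqref{eq:exact-low-rank-tuning-rho-half} is a sufficient condition as stated.

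There is no real obstacle. The only points to check are that the proposition holds for every $\rho\in(0,1)$ and that $\lambda$ enters only through $T\lambda^2$, so the exponent is monotone in $\lambda$ and any larger $\lambda$ is also admissible.
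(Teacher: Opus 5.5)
Your proof is correct and follows the paper's argument: multiplying the assumed lower bound on $\lambda^2$ by $(1-\rho)^2T/16$ makes the exponent in \eqref{eq:sharpened-score-event-bound} at most $\log\delta$, so $\mathbb P_{A_0}(\mathcal S_T(\lambda,\eta)^c)\le\delta$. You also write out the $\rho=1/2$ specialization (giving $64/T$ and $\log 5$), which the paper leaves implicit.
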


\begin{proof}
Under the displayed condition, the right-hand side of inequality \eqref{eq:sharpened-score-event-bound} in 
Proposition \ref{prop:exact-low-rank-score-control} is bounded by $\delta$.
\end{proof}

For the curvature result, fix
\[
0<c_0<b:=\frac{1}{2a_+},
\]
and define

\begin{equation}
    \Delta:=b-c_0,
    \qquad
    c_1:=\frac{b+c_0}{2},
    \qquad
    h:=\sqrt{\frac{c_1}{c_0}}-1.
\label{eq:curvature-constants}
\end{equation}
The constants $C_S=C_S(a_-)>0$, $c_B>0$, and
$C_R:=8\log(9)/a_-^2$ are specified in
Lemmas~\ref{lem:exact-low-rank-stable-block}-\ref{lem:exact-low-rank-cross-block}.

\begin{theorem}
\label{thm:exact-low-rank-curvature}
Suppose that Assumption~\ref{ass:exact-low-rank} holds. Fix
$0<c_0<(2a_+)^{-1}$ and $\delta\in(0,1)$, and let
\[
    \ell_\delta:=\log\!\left(\frac{6}{\delta}\right).
\]
If $T\geq2$ and
\begin{align}
    T &\geq \frac{3}{2a_-^2\Delta},
\label{eq:curvature-init-condition}\\
    T &\geq \frac{144C_S^2}{\Delta^2}\bigl(r+\ell_\delta\bigr),
\label{eq:curvature-stable-square-condition}\\
    T &\geq \frac{12C_S}{\Delta}\bigl(r+\ell_\delta\bigr),
\label{eq:curvature-stable-linear-condition}\\
    T &\geq \frac{6C_R}{\Delta}\bigl(d+\ell_\delta\bigr),
\label{eq:curvature-cross-condition}\\
    T &\geq \frac{c_1}{c_B}\bigl(m+\ell_\delta\bigr),
\label{eq:curvature-brownian-condition}\\
    T &\geq
    \frac{1}{h}\sqrt{\frac{C_R}{c_B}}
    \sqrt{\bigl(d+\ell_\delta\bigr)\bigl(m+\ell_\delta\bigr)},
\label{eq:curvature-triangular-condition}
\end{align}
then, for every $\eta\geq0$,
\begin{equation}
    \mathbb P_{A_0}\bigl(\mathcal R_T(c_0)\bigr)
    \geq1-\delta.
\label{eq:exact-low-rank-curvature-probability}
\end{equation}
Equivalently,
\[
    \mathbb P_{A_0}\!\left(
        \lambda_{\min}(C_T+\eta I_d)\geq c_0
    \right)
    \geq1-\delta.
\]
\end{theorem}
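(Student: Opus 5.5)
Since $C_T+\eta I_d\succeq C_T$ for every $\eta\geq0$, it suffices to show $\lambda_{\min}(C_T)\geq c_0$ with probability at least $1-\delta$; this is what makes the statement uniform in $\eta$. As in the proof of Proposition~\ref{prop:exact-low-rank-score-control}, an orthogonal change of coordinates lets us take $P=I_d$. Write $X=(Y,Z)$ with $Y\in\mathbb R^r$ the independent stable OU coordinates and $Z\in\mathbb R^m$ a standard Brownian motion. Decompose
\[
C_T=\begin{pmatrix} S & R\\ R^\top & W\end{pmatrix},\qquad
S=\frac1T\int_0^T Y_tY_t^\top dt,\quad
R=\frac1T\int_0^T Y_tZ_t^\top dt,\quad
W=\frac1T\int_0^T Z_tZ_t^\top dt .
\]
The plan is to control the three blocks separately on events of probability at least $1-\delta/6$ each (or $1-\delta/3$ for a two-sided block bound); this is consistent with $\ell_\delta=\log(6/\delta)$. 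We then combine them through a $2\times2$ comparison.

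\emph{Stable block.} The target is $\lambda_{\min}(S)\geq b-\Delta/2=c_1$, where $b=1/(2a_+)$ is the smallest stationary variance. For a unit vector $v$, write $\frac1T\int_0^T\langle v,Y_t\rangle^2dt$ as its expectation plus a fluctuation. The expectation is at least $b$ minus an $O(1/(a_-^2T))$ initialization term, and condition \eqref{eq:curvature-init-condition} makes this term at most $\Delta/6$ or so. For the fluctuation I would invoke the concentration in Lemma~\ref{lem:exact-low-rank-stable-block}, a Hanson--Wright/Bernstein-type bound with constant $C_S$ and mixed regime $\sqrt{(r+\ell)/T}+(r+\ell)/T$, together with a net over $\mathbb S^{r-1}$. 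Conditions \eqref{eq:curvature-stable-square-condition} and \eqref{eq:curvature-stable-linear-condition} are exactly what make both terms at most $\Delta/12$.

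\emph{Brownian block.} By Lemma~\ref{lem:exact-low-rank-brownian-block}, with high probability $\lambda_{\min}(W)\geq c_BT/(m+\ell_\delta)$ or a comparable lower bound of order $T$. Condition \eqref{eq:curvature-brownian-condition} then makes this at least $c_1$.

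\emph{Cross block.} Apply Itô to $Y_tZ_t^\top$ and use $dY=-\Lambda Y\,dt+dW^Y$. This expresses $\Lambda R$ as a boundary term $-Y_TZ_T^\top/T$ plus the martingale $\frac1T\int_0^T dW^Y_tZ_t^\top$ plus $\frac1T\int Y_t\,(dW^Z_t)^\top$-type terms. This is where the self-normalized machinery of Lemma~\ref{lem:self-normalized-moment} with a net argument enters. The expected outcome, stated in Lemma~\ref{lem:exact-low-rank-cross-block}, is a bound of the form
\[
\|R\,W^{-1/2}\|_{\mathrm{op}}^2\lesssim \frac{C_R(d+\ell_\delta)}{T}
\]
together with an unnormalized bound on $\|R\|_{\mathrm{op}}$. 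Condition \eqref{eq:curvature-cross-condition} makes the former at most $\Delta/6$.

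\emph{Combination.} For $x=(y,z)$,
\[
x^\top C_Tx\geq y^\top Sy+z^\top Wz-2|y^\top Rz| .
\]
I would bound the cross term by $2\|RW^{-1/2}\|_{\mathrm{op}}|y|\,|W^{1/2}z|$ and use Young's inequality with a weight $(1+h)$. Since $(1+h)^2=c_1/c_0$, the weight is chosen so that the Brownian block, lower bounded by $c_1$, still leaves $c_0$ after losing the factor. The residual requirement is then $\|R\|^2\le h^2\lambda_{\min}(S)\lambda_{\min}(W)$-type inequalities, which conditions \eqref{eq:curvature-cross-condition} and \eqref{eq:curvature-triangular-condition} guarantee. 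A union bound over the six events gives the claim.

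The main obstacle I expect is the cross block. $Z$ is non-stationary and of size $\sqrt T$, so naive bounds on $\|R\|_{\mathrm{op}}$ grow like $\sqrt{T}$. One must exploit self-normalization by $W$, and the matching geometric mean $\sqrt{(d+\ell)(m+\ell)}$ in \eqref{eq:curvature-triangular-condition}, to show that the off-diagonal coupling is dominated by the product of the diagonal curvatures.
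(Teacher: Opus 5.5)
Your reduction to $\eta=0$ and $P=I_d$ matches the paper. So do the stable block (your numbers give $\lambda_{\min}(S)\ge b-\Delta/3$, which is what is actually needed, not merely $c_1$), the Brownian block $\lambda_{\min}(W)\ge c_BT/(m+\ell_\delta)\ge c_1$, and the bound $\gamma^2:=\|RW^{-1}R^\top\|_{\mathrm{op}}\le C_R(d+\ell_\delta)/T\le\Delta/6$ from Lemma~\ref{lem:exact-low-rank-cross-block}. The cross block is not the real obstacle, because that lemma is available; the paper proves it by conditioning on the Brownian block and bounding the OU covariance operator, not via It\^o's formula. The gap is in the combination step, under either reading of your sketch.

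\emph{Unnormalized reading.} You invoke an unnormalized bound on $\|R\|_{\mathrm{op}}$ and a residual condition $\|R\|_{\mathrm{op}}^2\le h^2\lambda_{\min}(S)\lambda_{\min}(W)$. Neither follows from the hypotheses. Along the top eigenvector $v$ of $W$, $v^\top Wv$ is of order $Tm$. So the $r$ entries of $Rv$ are conditionally centred Gaussians with variance of order $m$, and $\|R\|_{\mathrm{op}}^2$ is typically of order $rm$. By contrast, $\lambda_{\min}(W)$ is only of order $T/m$. Your condition would therefore need $T\gtrsim rm^2$, whereas the theorem only assumes $T\gtrsim d+\ell_\delta$.

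\emph{Normalized reading.} Suppose instead you keep the split $2|y^\top Rz|\le\tau^{-1}\gamma^2|y|^2+\tau\,z^\top Wz$ with the fixed weight you describe, $1-\tau=(1+h)^{-2}=c_0/c_1$. Then the $y$-part requires $\gamma^2\le\tau(b-\Delta/3-c_0)=\Delta^2/(3c_1)$. The hypotheses give only $\gamma^2\le\Delta/6$; the other available bound, $\gamma^2\le h^2\lambda_{\min}(W)$, is useless when $\lambda_{\min}(W)$ is large. Since $\Delta/6\le\Delta^2/(3c_1)$ holds only for $c_0\le 3b/5$, the argument does not close on the full range $0<c_0<b$.

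The missing idea is to complete the square exactly, via the Schur factorization
\[
    C_T=L\begin{pmatrix}S-RW^{-1}R^\top&0\\0&W\end{pmatrix}L^\top,
    \qquad
    L=\begin{pmatrix}I_r&RW^{-1}\\0&I_m\end{pmatrix}.
\]
On the good event:
\begin{itemize}
\item $\lambda_{\min}(S-RW^{-1}R^\top)\ge b-\Delta/3-\Delta/6=c_1$;
\item $\lambda_{\min}(W)\ge c_1$ by \eqref{eq:curvature-brownian-condition};
\item $\|L^{-1}\|_{\mathrm{op}}\le 1+\|RW^{-1}\|_{\mathrm{op}}$, with $\|RW^{-1}\|_{\mathrm{op}}^2\le\|W^{-1}\|_{\mathrm{op}}\gamma^2\le h^2$ by \eqref{eq:curvature-triangular-condition}.
\end{itemize}
Hence $\lambda_{\min}(C_T)\ge c_1/(1+h)^2=c_0$. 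The constants $c_1,h$ in \eqref{eq:curvature-constants} and condition \eqref{eq:curvature-triangular-condition} are built for exactly this: what must be small is the normalized off-diagonal block $RW^{-1}$, not $\|R\|_{\mathrm{op}}$. A Young-type split can be rescued only by letting the weight depend on the realized $\lambda_{\min}(W)$, e.g.\ $1-\tau=c_0/\lambda_{\min}(W)$, and using both $\gamma^2\le\Delta/6$ and $\gamma^2\le h^2\lambda_{\min}(W)$. That works, but it requires a case analysis which the factorization avoids.
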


\begin{corollary}
\label{cor:exact-low-rank-curvature-simple}
Under Assumption~\ref{ass:exact-low-rank}, fix
$0<c_0<(2a_+)^{-1}$. There is a constant
\[
    K=K(a_-,a_+,c_0)>0
\]
such that, for every $\delta\in(0,1)$ and $T\geq2$,
\begin{equation}
    T\geq K\left\{d+\log\!\left(\frac{6}{\delta}\right)\right\}
\label{eq:exact-low-rank-simple-horizon-condition}
\end{equation}
implies that
\[
    \mathbb P_{A_0}\bigl(\mathcal R_T(c_0)\bigr)\geq1-\delta
\]
for every deterministic $\eta\geq0$.
\end{corollary}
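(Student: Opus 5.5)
The plan is to derive the corollary directly from Theorem~\ref{thm:exact-low-rank-curvature}. We show that a single linear horizon condition $T\geq K\{d+\ell_\delta\}$, with $\ell_\delta=\log(6/\delta)$, implies each of the six conditions \eqref{eq:curvature-init-condition}--\eqref{eq:curvature-triangular-condition}. Once $c_0\in(0,(2a_+)^{-1})$ is fixed, the quantities $b=1/(2a_+)$, $\Delta$, $c_1$ and $h$ in \eqref{eq:curvature-constants} depend only on $(a_+,c_0)$. The constants $C_S=C_S(a_-)$, $c_B$ and $C_R=8\log(9)/a_-^2$ are likewise fixed numbers. So every coefficient appearing in the six conditions is a function of $(a_-,a_+,c_0)$ alone. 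The theorem holds for every $\eta\geq0$, which covers in particular every deterministic $\eta$.

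The key elementary facts are the following. First, $1\leq r\leq d$ and $m=d-r\leq d$. Second, $\ell_\delta\geq\log 6>1$ for $\delta\in(0,1)$, so $d+\ell_\delta\geq 2$. Write $N:=d+\ell_\delta$. Then $r+\ell_\delta\leq N$, $m+\ell_\delta\leq N$, and consequently
\[
\sqrt{(d+\ell_\delta)(m+\ell_\delta)}\leq N .
\]
Using these, the conditions reduce as follows.
\begin{itemize}
\item Conditions \eqref{eq:curvature-stable-square-condition} and \eqref{eq:curvature-stable-linear-condition} hold once $T\geq \max\{144C_S^2/\Delta^2,\,12C_S/\Delta\}\,N$.
\item Condition \eqref{eq:curvature-cross-condition} holds once $T\geq (6C_R/\Delta)N$.
\item Condition \eqref{eq:curvature-brownian-condition} holds once $T\geq (c_1/c_B)N$.
\item Condition \eqref{eq:curvature-triangular-condition} holds once $T\geq h^{-1}\sqrt{C_R/c_B}\,N$.
\item The constant condition \eqref{eq:curvature-init-condition} is absorbed using $N\geq 1$: it suffices that $T\geq \frac{3}{2a_-^2\Delta}N$.
\end{itemize}

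We therefore take
\[
K:=\max\Big\{\tfrac{3}{2a_-^2\Delta},\ \tfrac{144C_S^2}{\Delta^2},\ \tfrac{12C_S}{\Delta},\ \tfrac{6C_R}{\Delta},\ \tfrac{c_1}{c_B},\ \tfrac1h\sqrt{\tfrac{C_R}{c_B}}\Big\}.
\]
This $K$ depends only on $(a_-,a_+,c_0)$. Combined with the standing requirement $T\geq2$, the hypothesis \eqref{eq:exact-low-rank-simple-horizon-condition} then gives all hypotheses of Theorem~\ref{thm:exact-low-rank-curvature}. That theorem yields $\mathbb P_{A_0}(\mathcal R_T(c_0))\geq1-\delta$.

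There is no real obstacle here. The only point needing care is that $h>0$, which holds because $c_1>c_0$ when $c_0<b$. This keeps the last entry of $K$ finite. One should also check that $C_S$ and $c_B$ from the lemmas carry no hidden dependence on $d$, $T$ or $\delta$; we take the statement preceding the theorem at face value on this.
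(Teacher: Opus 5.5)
Your proposal is correct and follows the paper's proof. Like the paper, you bound $r+\ell_\delta$, $m+\ell_\delta$ and $\sqrt{(d+\ell_\delta)(m+\ell_\delta)}$ by $d+\ell_\delta$, and then absorb all six conditions of Theorem~\ref{thm:exact-low-rank-curvature} into a single constant $K(a_-,a_+,c_0)$. Your explicit $K$ (the maximum of the six coefficients), together with absorbing \eqref{eq:curvature-init-condition} via $d+\ell_\delta\geq1$, just makes precise the paper's phrase ``after increasing the constant $K$.''
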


\begin{proof}
Since $r\leq d$, $m\leq d$, and
\[
    \sqrt{\bigl(d+\ell_\delta\bigr)\bigl(m+\ell_\delta\bigr)}
    \leq d+\ell_\delta,
\]
all six conditions in Theorem~\ref{thm:exact-low-rank-curvature} follow from
\eqref{eq:exact-low-rank-simple-horizon-condition} after increasing the
constant $K$.
\end{proof}

\begin{corollary}
\label{cor:exact-low-rank-rate}
Suppose that Assumption~\ref{ass:exact-low-rank} holds, fix
$0<c_0<(2a_+)^{-1}$ and $\beta>0$, and set $\eta=\beta/T$. Let
$\delta\in(0,1)$ and choose $\lambda$ so that
\begin{equation}
\begin{aligned}
    \lambda^2
    \geq
    \frac{64}{T}
    \Bigg[
        &\log\!\left(\frac{2}{\delta}\right)
        +d\log 5 \\
        &+\frac r4\log\!\left(1+\frac{T}{2a_-\beta}\right)
        +\frac{d-r}{4}\log\!\left(1+\frac{T^2}{2\beta}\right)
    \Bigg].
\end{aligned}
\label{eq:exact-low-rank-rate-tuning}
\end{equation}
If
\[
    T\geq K\left\{d+\log\!\left(\frac{12}{\delta}\right)\right\},
\]
where $K$ is the constant in
Corollary~\ref{cor:exact-low-rank-curvature-simple}, then, with probability at
least $1-\delta$,
\begin{equation}
    \big\|\widehat A_{\lambda,\eta}-A_0\big\|_F^2
    \leq
    \frac{72}{c_0}\,r\lambda^2
    +\frac{2a_+^2}{c_0^2}\,r\eta^2.
\label{eq:exact-low-rank-rate-bound}
\end{equation}

\end{corollary}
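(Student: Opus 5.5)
The plan is to combine three earlier results on a single intersection event: Corollary~\ref{cor:exact-low-rank-oracle} (the exact-rank Frobenius oracle bound), Corollary~\ref{cor:exact-low-rank-score-tuning} for the score event, and Corollary~\ref{cor:exact-low-rank-curvature-simple} for the curvature event. I split the failure probability evenly, assigning $\delta/2$ to each event, and then use a union bound.

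\emph{Score event.} I apply Corollary~\ref{cor:exact-low-rank-score-tuning} with $\rho=1/2$ and confidence level $\delta/2$. Substituting $\eta=\beta/T$ into \eqref{eq:exact-low-rank-tuning-rho-half} gives
\[
\frac{1}{2a_-\eta}=\frac{T}{2a_-\beta},
\qquad
\frac{T}{2\eta}=\frac{T^2}{2\beta},
\qquad
\log(1/(\delta/2))=\log(2/\delta).
\]
Hence \eqref{eq:exact-low-rank-rate-tuning} is exactly the sufficient condition \eqref{eq:exact-low-rank-tuning-rho-half} at level $\delta/2$, and $\mathbb P_{A_0}(\mathcal S_T(\lambda,\eta)^c)\le\delta/2$.

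\emph{Curvature event.} I apply Corollary~\ref{cor:exact-low-rank-curvature-simple} with $\delta$ replaced by $\delta/2$, so that $\log(6/(\delta/2))=\log(12/\delta)$. The assumed horizon condition $T\ge K\{d+\log(12/\delta)\}$ is then precisely what that corollary requires. Since $\eta=\beta/T$ is deterministic, the corollary gives $\mathbb P_{A_0}(\mathcal R_T(c_0)^c)\le\delta/2$. The constraint $T\ge2$ needed there should be automatic, since $K$ may be taken $\ge 1$ and $d+\log 12>2$; if necessary, I would absorb it into $K$.

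\emph{Deterministic bound on the intersection.} Since $\operatorname{rank}(A_0)=r$, Corollary~\ref{cor:exact-low-rank-oracle} holds with the constants $(C_2,C_3)=(72,2)$ from the proof of Theorem~\ref{thm:oracle-inequalities}. It gives
\[
\|\widehat A_{\lambda,\eta}-A_0\|_F^2\le \frac{72}{c_0}\lambda^2 r+\frac{2}{c_0}\eta^2\|A_0B^{-1}\|_F^2.
\]
On $\mathcal R_T(c_0)$ we have $B^{-2}\preceq c_0^{-1}I_d$, so $\|A_0B^{-1}\|_F^2\le c_0^{-1}\|A_0\|_F^2$. Moreover, $\|A_0\|_F^2=\sum_{i\le r}a_i^2\le r a_+^2$. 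Together these give \eqref{eq:exact-low-rank-rate-bound}. The union bound then yields probability at least $1-\delta$.

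No step is a real obstacle, because everything reduces to earlier results. The only points requiring care are matching the $\delta/2$ substitutions to the constants $\log(2/\delta)$ and $\log(12/\delta)$, and confirming that the curvature corollary's deterministic-$\eta$ hypothesis and the $T\ge2$ requirement are met.
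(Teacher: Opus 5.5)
Your proposal is correct and follows the paper's proof essentially step for step. You apply Corollary~\ref{cor:exact-low-rank-score-tuning} at level $\delta/2$ with $\rho=1/2$ and $\eta=\beta/T$, and Corollary~\ref{cor:exact-low-rank-curvature-simple} at level $\delta/2$, then take a union bound and use Corollary~\ref{cor:exact-low-rank-oracle} with $(C_2,C_3)=(72,2)$ together with $\|A_0B_{T,\eta}^{-1}\|_F^2\le c_0^{-1}\|A_0\|_F^2\le c_0^{-1}a_+^2r$. Your handling of the $T\ge2$ hypothesis, by taking $K\ge1$ without loss of generality, is a small check that the paper's proof passes over silently.
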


\begin{proof}
Applying Corollary~\ref{cor:exact-low-rank-score-tuning} with confidence level
$\delta/2$, $\rho=1/2$, and $\eta=\beta/T$ we obtain
\[
    \mathbb P_{A_0}\bigl(\mathcal S_T(\lambda,\eta)\bigr)
    \geq1-\frac{\delta}{2}.
\]
By Corollary~\ref{cor:exact-low-rank-curvature-simple} with confidence level
$\delta/2$,
\[
    \mathbb P_{A_0}\bigl(\mathcal R_T(c_0)\bigr)
    \geq1-\frac{\delta}{2}.
\]
Thus, the two events hold simultaneously with probability at least $1-\delta$.
On their intersection, Corollary~\ref{cor:exact-low-rank-oracle} gives
\[
    \big\|\widehat A_{\lambda,\eta}-A_0\big\|_F^2
    \leq
    \frac{72}{c_0}\,r\lambda^2
    +\frac{2}{c_0}\eta^2\big\|A_0B_{T,\eta}^{-1}\big\|_F^2.
\]
Moreover, on $\mathcal R_T(c_0)$,
$B_{T,\eta}^{-2}\preceq c_0^{-1}I_d$, and therefore
\[
    \big\|A_0B_{T,\eta}^{-1}\big\|_F^2
    =\operatorname{tr}\!\left(A_0^\top A_0B_{T,\eta}^{-2}\right)
    \leq\frac{\|A_0\|_F^2}{c_0}
    \leq\frac{a_+^2r}{c_0},
\]
which completes the proof.
\end{proof}

\noindent
Next, fix $\beta>0$ and $q>0$, and set
\[
\delta_T:=T^{-q},
\qquad
\eta_T:=\frac{\beta}{T}.
\]
Let $\lambda_T$ be defined by equality in \eqref{eq:exact-low-rank-rate-tuning}
with $\delta=\delta_T$. There exists a constant
\[
C=C(a_-,a_+,c_0,\beta,q)>0
\]
such that, for all sufficiently large $T$ satisfying
\[
T\geq K\bigl\{d_T+q\log T+\log 12\bigr\},
\]
we have
\[
\lambda_T^2\leq C\frac{d_T\log T}{T}.
\]
Consequently, after increasing $C$ if necessary,
\[
\mathbb P_{A_0}\!\left(
\bigl\|\widehat A_{\lambda_T,\eta_T}-A_0\bigr\|_F^2
\leq C\frac{r_Td_T\log T}{T}
\right)\geq 1-T^{-q}.
\]
In particular, along every joint asymptotic regime as $T\to\infty$ for which
\[
\frac{r_Td_T\log T}{T}\longrightarrow0,
\]
the estimator is Frobenius-norm consistent and
\[
\bigl\|\widehat A_{\lambda_T,\eta_T}-A_0\bigr\|_F^2
=
O_{\mathbb P}\!\left(\frac{r_Td_T\log T}{T}\right).
\]
Equivalently,
\[
\bigl\|\widehat A_{\lambda_T,\eta_T}-A_0\bigr\|_F
=
O_{\mathbb P}\!\left(
\sqrt{\frac{r_Td_T\log T}{T}}
\right).
\]
Thus, up to a factor $\sqrt{\log T}$ in Frobenius norm, the result has the
usual rank-$r_T$ matrix-estimation scaling
\[
\sqrt{\frac{r_Td_T}{T}}.
\]

\subsection{Block decomposition and auxiliary bounds}

The purpose of this subsection is to establish the blockwise estimates
needed for the proof of Theorem~\ref{thm:exact-low-rank-curvature}. Under the orthogonal decomposition
induced by the stable and null eigenspaces of \(A_0\), the empirical
covariance matrix \(C_T\) splits into the stable block \(S_T\), the
Brownian block \(Q_T\), and the cross block \(R_T\). We derive
concentration bounds for these three blocks and then
combine them, through a Schur-complement argument, to prove Theorem~\ref{thm:exact-low-rank-curvature}.

\label{subsec:exact-low-rank-blocks}

More precisely, under Assumption~\ref{ass:exact-low-rank}, let
\[
    Y_t:=P^\top X_t=
    \begin{pmatrix}U_t\\V_t\end{pmatrix},
\]
where $U_t\in\mathbb R^r$ and $V_t\in\mathbb R^m$. The orthogonal invariance
of Brownian motion gives independent coordinates
\begin{equation}
    dU_t^i=-a_iU_t^i\,dt+dB_t^i,
    \qquad U_0^i=0,
    \qquad i=1,\ldots,r,
\label{eq:stable-coordinate-model}
\end{equation}
whereas $V$ is an $m$-dimensional standard Brownian motion independent of $U$.
Consequently,
\begin{equation}
    P^\top C_TP
    =
    \begin{pmatrix}
        S_T & R_T\\
        R_T^\top & Q_T
    \end{pmatrix},
\label{eq:exact-low-rank-block-covariance}
\end{equation}
where
\begin{equation}
    S_T:=\frac1T\int_0^T U_tU_t^\top\,dt,
    \qquad
    R_T:=\frac1T\int_0^T U_tV_t^\top\,dt,
    \qquad
    Q_T:=\frac1T\int_0^T V_tV_t^\top\,dt.
\label{eq:exact-low-rank-block-definitions}
\end{equation}

\begin{lemma}
\label{lem:exact-low-rank-stable-block}
There exists a constant $C_S>0$, depending only on $a_-$, such that, for every
$x>0$,
\begin{equation}
    \mathbb P_{A_0}\!\left(
        \|S_T-\mathbb E_{A_0}S_T\|_{\mathrm{op}}
        >C_S\left\{
            \sqrt{\frac{r+x}{T}}+\frac{r+x}{T}
        \right\}
    \right)
    \leq2\exp(-x).
\label{eq:exact-low-rank-stable-concentration}
\end{equation}
Moreover,
\begin{equation}
    \lambda_{\min}\!\left(\mathbb E_{A_0}S_T\right)
    \geq
    \frac1{2a_+}-\frac1{4a_-^2T}.
\label{eq:exact-low-rank-stable-mean-lower}
\end{equation}
\end{lemma}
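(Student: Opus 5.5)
The plan has two independent parts: the mean bound \eqref{eq:exact-low-rank-stable-mean-lower}, which is an explicit computation, and the concentration bound \eqref{eq:exact-low-rank-stable-concentration}. For the concentration I would use a fixed-direction Gaussian-chaos tail bound combined with a net argument over $\mathbb S^{r-1}$.

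\emph{Mean.} By \eqref{eq:stable-coordinate-model} the coordinates $U^1,\dots,U^r$ are independent and centred, so $\mathbb E_{A_0}S_T$ is diagonal. By \eqref{eq:stable-diagonal-covariance-expectation}, its $i$th diagonal entry is
\[
\frac1{2a_i}-\frac{1-e^{-2a_iT}}{4a_i^2T}\ \ge\ \frac1{2a_+}-\frac1{4a_-^2T},
\]
which gives \eqref{eq:exact-low-rank-stable-mean-lower} at once.

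\emph{Fixed direction.} Fix $v\in\mathbb S^{r-1}$ and set $Z_t:=v^\top U_t$. This is a centred Gaussian process with covariance
\[
k(s,t)=\sum_{i=1}^r v_i^2\,e^{-a_i|t-s|}\,\frac{1-e^{-2a_i(s\wedge t)}}{2a_i},
\]
and $v^\top S_Tv=T^{-1}\int_0^TZ_t^2\,dt$. Let $K$ be the integral operator on $L^2[0,T]$ with kernel $k/T$. By the Karhunen--Lo\`eve expansion, $v^\top S_Tv$ has the law of $\sum_j\mu_jg_j^2$, where the $g_j$ are i.i.d.\ $N(0,1)$ and the $\mu_j\ge0$ are the eigenvalues of $K$.

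The Laurent--Massart inequality then gives
\[
\mathbb P\Big(|v^\top(S_T-\mathbb E S_T)v|>2\|K\|_{\mathrm{HS}}\sqrt x+2\|K\|_{\mathrm{op}}x\Big)\le 2e^{-x}.
\]
It remains to bound the two norms of $K$, using $|k(s,t)|\le(2a_-)^{-1}e^{-a_-|t-s|}$.
\begin{itemize}[leftmargin=*]
\item Schur's test gives $\|K\|_{\mathrm{op}}\le T^{-1}\sup_s\int_0^T|k(s,t)|\,dt\le (a_-^2T)^{-1}$.
\item Similarly, $\|K\|_{\mathrm{HS}}^2=T^{-2}\int\!\!\int k^2\le T^{-1}\sup_s\int k(s,t)^2\,dt\le (4a_-^3T)^{-1}$.
\end{itemize}
Both bounds depend only on $a_-$ and are uniform in $v$. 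Hence, for each fixed $v$, the deviation exceeds $c(a_-)\{\sqrt{x/T}+x/T\}$ with probability at most $2e^{-x}$.

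\emph{Net step.} Let $\mathcal N$ be a $1/4$-net of $\mathbb S^{r-1}$ with $|\mathcal N|\le9^r$. For a symmetric matrix $M$, the standard bound is $\|M\|_{\mathrm{op}}\le(1-2\cdot\tfrac14)^{-1}\max_{v\in\mathcal N}|v^\top Mv|=2\max_{v\in\mathcal N}|v^\top Mv|$. Apply the fixed-direction bound with $x$ replaced by $x+r\log9$ and take a union bound over $\mathcal N$. The total failure probability is at most $2e^{-x}$, and since $x+r\log 9\le \log 9\,(r+x)$, the deviation level becomes $C_S\{\sqrt{(r+x)/T}+(r+x)/T\}$ with $C_S$ depending only on $a_-$.

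\emph{Main obstacle.} I expect the main difficulty to be the norm bounds for the covariance operator of $Z$. These must hold uniformly in $v$, must hold despite the non-stationary start $U_0=0$, and must hold although the coordinates have different rates $a_i$. Handling this through the pointwise envelope $|k(s,t)|\le (2a_-)^{-1}e^{-a_-|t-s|}$ and Schur's test should suffice.
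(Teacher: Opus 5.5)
Your proposal is correct and follows essentially the same route as the paper. Both arguments compute the mean explicitly from the diagonal entries. Both bound the operator and Hilbert--Schmidt norms of the covariance operator via the envelope $|k(s,t)|\le (2a_-)^{-1}e^{-a_-|t-s|}$ and Schur's test, apply Karhunen--Lo\`eve plus Laurent--Massart in a fixed direction, and take a union bound over a $1/4$-net of size at most $9^r$. Your version is slightly more explicit: you state the factor-$2$ quadratic-form net inequality for symmetric matrices and the shift $x\mapsto x+r\log 9$, which is exactly the ``enlarging the constant'' step the paper leaves implicit.
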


\begin{proof}
Let $\mathcal H_T:=L^2([0,T],dt/T)$ and for $u\in\mathbb S^{r-1}$, put
$G_u(t):=u^\top U_t$. Its covariance operator $\mathcal K_{u,T}$ on
$\mathcal H_T$ has kernel
\[
    \frac1T\operatorname{Cov}\bigl(G_u(s),G_u(t)\bigr).
\]
Since
\[
    \left|\operatorname{Cov}\bigl(G_u(s),G_u(t)\bigr)\right|
    \leq\frac{1}{2a_-}\exp(-a_-|t-s|),
\]
Schur's test gives
\[
    \|\mathcal K_{u,T}\|_{\mathrm{op}}\leq\frac1{a_-^2T},
    \qquad
    \operatorname{tr}(\mathcal K_{u,T})\leq\frac1{2a_-},
    \qquad
    \operatorname{tr}(\mathcal K_{u,T}^2)\leq\frac1{2a_-^3T}.
\]
The Karhunen-Lo\`eve expansion of $G_u$ in $\mathcal H_T$ and the Gaussian
quadratic-form inequality of \cite[Lemma~1]{LaurentMassart2000} therefore yield
\[
    \mathbb P_{A_0}\!\left(
        \left|u^\top(S_T-\mathbb E_{A_0}S_T)u\right|
        >C\left\{\sqrt{\frac{y}{T}}+\frac{y}{T}\right\}
    \right)
    \leq2\exp(-y)
\]
for a constant $C$ depending only on $a_-$. Applying this inequality on a
$1/4$-net of $\mathbb S^{r-1}$, whose cardinality is at most $9^r$, yields
\eqref{eq:exact-low-rank-stable-concentration} after enlarging the constant.

Finally, $\mathbb E_{A_0}S_T$ is diagonal and
\[
    \mathbb E_{A_0}(S_T)_{ii}
    =\frac1{2a_i}
     -\frac{1-\exp(-2a_iT)}{4a_i^2T}
    \geq\frac1{2a_+}-\frac1{4a_-^2T},
\]
which proves \eqref{eq:exact-low-rank-stable-mean-lower}.
\end{proof}

\begin{lemma}
\label{lem:exact-low-rank-brownian-block}
There is a universal constant $c_B>0$ such that, for every $x\geq1$,
\begin{equation}
    \mathbb P_{A_0}\!\left(
        \lambda_{\min}(Q_T)
        \geq c_B\frac{T}{m+x}
    \right)
    \geq1-\exp(-x).
\label{eq:exact-low-rank-brownian-lower}
\end{equation}
In particular, $Q_T$ is positive definite almost surely.
\end{lemma}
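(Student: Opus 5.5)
The plan is to reduce the claim to a small-ball estimate for a single quadratic functional of Brownian motion, and then to extend it to the whole unit sphere by a net argument made robust by a crude upper bound on $Q_T$. Write $\overline Q_T:=TQ_T=\int_0^T V_tV_t^\top\,dt$. Brownian scaling gives $V_t\overset{d}{=}\sqrt T\,\widetilde V_{t/T}$ with $\widetilde V$ a standard $m$-dimensional Brownian motion on $[0,1]$. Hence $Q_T\overset{d}{=}T\,\widetilde Q$, where $\widetilde Q:=\int_0^1\widetilde V_s\widetilde V_s^\top\,ds$. The statement is therefore equivalent to the scale-free bound
\[
\mathbb P\bigl(\lambda_{\min}(\widetilde Q)<c_B/(m+x)\bigr)\leq e^{-x},\qquad x\geq1,
\]
and no property of $A_0$ enters beyond the independence and Brownian nature of $V$ from the block decomposition. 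The last assertion, almost-sure positive definiteness, follows by letting $x\to\infty$. Alternatively, one can note directly that $u^\top\widetilde Q u=\int_0^1\langle u,\widetilde V_s\rangle^2ds>0$ a.s. for each fixed $u$, and use continuity of the paths.

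\textbf{Step 1 (fixed direction).} For $u\in\mathbb S^{m-1}$, the process $\langle u,\widetilde V\rangle$ is a standard one-dimensional Brownian motion $\beta$. I would use the classical Laplace transform (Cameron--Martin)
\[
\mathbb E\exp\Bigl(-\theta\int_0^1\beta_s^2ds\Bigr)=\bigl(\cosh\sqrt{2\theta}\bigr)^{-1/2}\leq\sqrt2\,e^{-\sqrt{\theta/2}}.
\]
The exponential Chebyshev inequality then gives
\[
\mathbb P\Bigl(\int_0^1\beta^2\leq\varepsilon\Bigr)\leq\sqrt2\exp\bigl(\theta\varepsilon-\sqrt{\theta/2}\bigr).
\]
Optimising with $\theta=1/(8\varepsilon^2)$ yields $\mathbb P(u^\top\widetilde Qu\leq\varepsilon)\leq\sqrt2\,e^{-1/(8\varepsilon)}$. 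This is the key small-ball estimate: the probability decays like $e^{-c/\varepsilon}$, which is exactly what produces the $1/(m+x)$ scale.

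\textbf{Step 2 (upper envelope).} I need a high-probability bound on $\|\widetilde Q\|_{\mathrm{op}}$ to pass from a net to the full sphere. Since $\|\widetilde Q\|_{\mathrm{op}}\leq\sup_{s\leq1}\|\widetilde V_s\|_2^2$, Gaussian concentration for the supremum of the norm of Brownian motion, together with Doob's inequality, gives $\|\widetilde Q\|_{\mathrm{op}}\leq L:=C(m+x)$ with probability at least $1-e^{-x}/2$.

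\textbf{Step 3 (net argument; main obstacle).} Take an $\epsilon$-net $\mathcal N$ of $\mathbb S^{m-1}$ with $|\mathcal N|\leq(3/\epsilon)^m$. For any $v\in\mathbb S^{m-1}$, pick $u\in\mathcal N$ with $\|u-v\|\leq\epsilon$. Using $\widetilde Q\succeq0$,
\[
\sqrt{v^\top\widetilde Qv}\geq\sqrt{u^\top\widetilde Qu}-\epsilon\sqrt{\|\widetilde Q\|_{\mathrm{op}}}.
\]
I set $\varepsilon=c/(m+x)$ and require $\sqrt{u^\top\widetilde Qu}\geq2\sqrt\varepsilon$ on the net, with $\epsilon\sqrt L\leq\sqrt\varepsilon$. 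This forces $\epsilon\asymp\sqrt{c}/(m+x)$, so the net has cardinality $\exp\{m\log(C(m+x)/\sqrt c)\}$.

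Against the Step 1 failure probability $\sqrt2\exp(-(m+x)/(32c))$, this cardinality leaves a $\log(m+x)$ loss. That loss is the main obstacle. To remove it, I would first try the sharper route of working directly with $\lambda_{\min}$: compare $\widetilde Q$ with a Riemann-type discretisation. The matrix $\sum_{k=1}^{n}n^{-1}\widetilde V_{k/n}\widetilde V_{k/n}^\top$ with $n\asymp m+x$ can be written as $n^{-1}G^\top LG$, where $G$ is an $n\times m$ i.i.d. Gaussian matrix and $L$ is the deterministic covariance operator of the random walk. The operator $L$ has at least $n/2$ eigenvalues of order $n^{-2}\cdot n^{2}/k^2$, so one restricts to the top $n/2$ spectral components.

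Smallest-singular-value bounds for rectangular Gaussian matrices (Davidson--Szarek/Gordon) then give $\lambda_{\min}\gtrsim 1/(m+x)$ with probability $1-e^{-x}$, provided $n\geq 2m+Cx$. To carry this out, I would write the Karhunen--Lo\`eve expansion
\[
\widetilde V_s=\sum_k\frac{\sqrt2\sin((k-\tfrac12)\pi s)}{(k-\tfrac12)\pi}\,g_k,
\]
with i.i.d. $g_k\sim N(0,I_m)$. This gives
\[
\widetilde Q=\sum_k\frac{g_kg_k^\top}{(k-\tfrac12)^2\pi^2}\succeq\frac{1}{\pi^2N^2}\sum_{k\leq N}g_kg_k^\top.
\]
Here $G_N=(g_1,\dots,g_N)$ is an $m\times N$ Gaussian matrix. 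Choosing $N=2m+Cx$, the bound $s_{\min}(G_N)\geq\sqrt N-\sqrt m-\sqrt{2x}\gtrsim\sqrt N$ holds with probability $1-e^{-x}$. Therefore $\lambda_{\min}(\widetilde Q)\gtrsim N/N^2\asymp 1/(m+x)$. This KL route avoids nets entirely and is the argument I would actually write; Steps 1--3 serve only as a fallback.
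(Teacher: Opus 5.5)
Your final Karhunen--Lo\`eve argument (Brownian scaling, $\widetilde Q\succeq \pi^{-2}N^{-2}G_NG_N^\top$ with $N\asymp m+x$, then the Davidson--Szarek lower bound on $s_{\min}(G_N)$) is essentially the paper's proof, and it is correct. The only difference is the truncation level: the paper takes $N=\lceil 16(m+x)\rceil$, while you take $N=2m+Cx$, which gives $\sqrt N-\sqrt m-\sqrt{2x}\gtrsim\sqrt{m+x}$ uniformly provided $C>4$; the net-based Steps 1--3 are unnecessary and, as you note, would only yield the bound up to a $\log(m+x)$ loss.
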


\begin{proof}
Brownian scaling gives
\[
    Q_T\stackrel{d}{=}T\mathsf G_m,
    \qquad
    \mathsf G_m:=\int_0^1B_uB_u^\top\,du,
\]
where $B$ is an $m$-dimensional standard Brownian motion. Its
Karhunen-Lo\`eve expansion yields
\[
    \mathsf G_m=\sum_{k\geq1}\mu_kz_kz_k^\top,
    \qquad
    \mu_k=\frac{1}{\pi^2(k-\tfrac12)^2},
\]
where $(z_k)_{k\geq1}$ are independent standard Gaussian vectors in
$\mathbb R^m$. Set $N:=\lceil16(m+x)\rceil$ and
$Z_N:=(z_1,\ldots,z_N)\in\mathbb R^{m\times N}$. Then
\[
    \mathsf G_m\succeq\mu_NZ_NZ_N^\top,
    \qquad
    \lambda_{\min}(\mathsf G_m)\geq\mu_Ns_{\min}(Z_N)^2.
\]
By the Gaussian smallest-singular-value inequality
\cite[Theorem~2.13]{DavidsonSzarek2001},
\[
    \mathbb P_{A_0}\!\left(
        s_{\min}(Z_N)<\sqrt N-\sqrt m-\sqrt{2x}
    \right)
    \leq \exp(-x).
\]
Since $N\geq16(m+x)$ and $m+x\geq1$,
\[
    \sqrt N-\sqrt m-\sqrt{2x}
    \geq(4-\sqrt3)\sqrt{m+x},
\]
whereas $N\leq17(m+x)$ and hence
\[
    \mu_N\geq\frac{1}{289\pi^2(m+x)^2}.
\]
Combining these estimates proves
\eqref{eq:exact-low-rank-brownian-lower} after decreasing the universal
constant. Positive definiteness follows by retaining the first $m$ terms in the
Karhunen-Lo\`eve expansion.
\end{proof}

\begin{lemma}
\label{lem:exact-low-rank-cross-block}
For every $x>0$,
\begin{equation}
    \mathbb P_{A_0}\!\left(
        \|R_TQ_T^{-1}R_T^\top\|_{\mathrm{op}}
        >C_R\frac{d+x}{T}
    \right)
    \leq2\exp(-x),
    \qquad
    C_R:=\frac{8\log9}{a_-^2}.
\label{eq:exact-low-rank-cross-bound}
\end{equation}
\end{lemma}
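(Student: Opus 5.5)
The plan is to condition on the Brownian block $V$ and exploit its independence from the stable block $U$. Conditionally on $V$, the cross block $R_T=\frac1T\int_0^T U_tV_t^\top dt$ is a centred Gaussian matrix, because it is linear in the Gaussian process $U$. Since $Q_T$ is almost surely positive definite by Lemma~\ref{lem:exact-low-rank-brownian-block}, we can write
\[
\|R_TQ_T^{-1}R_T^\top\|_{\mathrm{op}}=\|M_T\|_{\mathrm{op}}^2,\qquad M_T:=R_TQ_T^{-1/2}\in\mathbb R^{r\times m}.
\]
It therefore suffices to show that, conditionally on $V$, the Gaussian matrix $M_T$ has all one-dimensional marginals with variance at most $1/(a_-^2T)$. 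The bound then follows from a two-sided net argument, and integrating out $V$ gives an unconditional statement.

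\emph{Variance bound.} Fix unit vectors $u\in\mathbb S^{r-1}$ and $v\in\mathbb S^{m-1}$. Set $y:=Q_T^{-1/2}v$ and $f(t):=V_t^\top y$, and use the notation $G_u(t)=u^\top U_t$ from the proof of Lemma~\ref{lem:exact-low-rank-stable-block}. Then
\[
u^\top M_Tv=\frac1T\int_0^T G_u(t)f(t)\,dt,
\]
and conditionally on $V$ its variance is
\[
\frac1{T^2}\int\!\!\int f(s)\operatorname{Cov}(G_u(s),G_u(t))f(t)\,ds\,dt .
\]
We use the kernel bound $|\operatorname{Cov}(G_u(s),G_u(t))|\le(2a_-)^{-1}e^{-a_-|t-s|}$, which holds because the coordinates of $U$ are independent and $a_i\ge a_-$. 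Schur's test on $L^2([0,T],dt)$ bounds the operator norm of this kernel by $(2a_-)^{-1}\int_{\mathbb R}e^{-a_-|s|}ds=a_-^{-2}$. Hence the variance is at most
\[
\frac{1}{a_-^2T^2}\int_0^T f(t)^2dt=\frac{1}{a_-^2T}\,y^\top Q_Ty=\frac{1}{a_-^2T}.
\]
This is the key step. The self-normalisation by $Q_T^{-1/2}$ exactly cancels the (large, order $T$) Brownian energy, so no lower bound on $Q_T$ is needed here.

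\emph{Net argument.} Take $1/4$-nets $\mathcal N_r$ of $\mathbb S^{r-1}$ and $\mathcal N_m$ of $\mathbb S^{m-1}$, with cardinalities at most $9^r$ and $9^m$. The standard bilinear net inequality gives
\[
\|M\|_{\mathrm{op}}\le\Bigl(1-\tfrac14-\tfrac14\Bigr)^{-1}\max_{u\in\mathcal N_r,\,v\in\mathcal N_m}|u^\top Mv|=2\max_{u,v}|u^\top Mv|.
\]
The Gaussian tail gives, conditionally on $V$, $\mathbb P(|u^\top M_Tv|>s)\le2\exp(-a_-^2Ts^2/2)$. A union bound over the $9^{r+m}=9^d$ pairs with
\[
s^2=\frac{2(d\log 9+x)}{a_-^2T}
\]
yields, with conditional probability at least $1-2e^{-x}$,
\[
\|M_T\|_{\mathrm{op}}^2\le 4s^2=\frac{8(d\log9+x)}{a_-^2T}\le\frac{8\log 9}{a_-^2}\,\frac{d+x}{T},
\]
where the last step uses $\log 9>1$. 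The conditional bound does not depend on $V$, so taking expectation over $V$ gives~\eqref{eq:exact-low-rank-cross-bound}.

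The main obstacle is the variance bound: one has to be careful that the operator norm of the covariance kernel is taken on $L^2(dt)$, not on $L^2(dt/T)$, so that the factors of $T$ combine correctly to produce exactly $1/(a_-^2T)$. The remaining steps are routine.
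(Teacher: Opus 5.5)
Your proposal is correct and follows essentially the paper's own argument. You condition on $V$, write $R_TQ_T^{-1}R_T^\top=\Gamma_T\Gamma_T^\top$ with $\Gamma_T=R_TQ_T^{-1/2}$, bound the conditional variance of $u^\top\Gamma_Tv$ by $1/(a_-^2T)$ via Schur's test on the exponential covariance kernel (the $Q_T^{-1/2}$ normalisation makes the test functions orthonormal), and finish with two $1/4$-nets, a union bound over $9^d$ pairs and the same choice of $t^2$. The only cosmetic difference is that you bound the variance of the bilinear form directly through $G_u$, whereas the paper bounds each row covariance by $\Sigma_{i,T}(V)\preceq (a_-^2T)^{-1}I_m$ and then uses conditional independence of the rows.
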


\begin{proof}
Again let $\mathcal H_T:=L^2([0,T],dt/T)$ and $\mathcal G_V:=\sigma\{V_t:0\leq t\leq T\}$. By
Lemma~\ref{lem:exact-low-rank-brownian-block}, $Q_T$ is invertible almost
surely. On this event, define
\[
    \phi_V(t):=Q_T^{-1/2}V_t\in\mathbb R^m,
    \qquad
    \Gamma_T:=R_TQ_T^{-1/2}\in\mathbb R^{r\times m}.
\]
The coordinates of $\phi_V$ are orthonormal in $\mathcal H_T$, and
\begin{equation}
    (\Gamma_T)_{ij}=\langle U^i,\phi_{V,j}\rangle_{\mathcal H_T},
    \qquad
    R_TQ_T^{-1}R_T^\top=\Gamma_T\Gamma_T^\top.
\label{eq:exact-low-rank-cross-representation}
\end{equation}
Conditional on $\mathcal G_V$, the functions
$\varphi_{V,1},\ldots,\varphi_{V,m}$ are deterministic. Since $U^i$ is
independent of $\mathcal G_V$, its conditional law given $\mathcal G_V$
coincides with its unconditional law. Hence, conditionally on
$\mathcal G_V$, the $i$th row of $\Gamma_T$ is a centered Gaussian vector.

For a deterministic $f\in\mathcal H_T$, Fubini theorem gives
\begin{equation}
    \langle U^i,f\rangle_{\mathcal H_T}
    =\int_0^T\left\{
        \frac1T\int_s^T \exp(-a_i(t-s))f(t)\,dt
    \right\}dB_s^i.
\label{eq:exact-low-rank-ito-functional}
\end{equation}
Let $\mathcal K_{i,T}$ denote the covariance operator of $U^i$ on
$\mathcal H_T$. The same Schur-test argument as in the proof of
Lemma~\ref{lem:exact-low-rank-stable-block} gives
\[
    \|\mathcal K_{i,T}\|_{\mathrm{op}}\leq\frac1{a_-^2T}.
\]
For $\theta\in\mathbb R^m$, set
\[
    f_{\theta,V}:=\sum_{j=1}^m\theta_j\phi_{V,j}.
\]
Since $\|f_{\theta,V}\|_{\mathcal H_T}=\|\theta\|_2$, the conditional
covariance matrix $\Sigma_{i,T}(V)$ of the $i$th row of $\Gamma_T$ satisfies
\[
    \theta^\top\Sigma_{i,T}(V)\theta
    =\langle\mathcal K_{i,T}f_{\theta,V},f_{\theta,V}\rangle_{\mathcal H_T}
    \leq\frac{\|\theta\|_2^2}{a_-^2T}.
\]
Thus,
\begin{equation}
    \Sigma_{i,T}(V)\preceq\frac1{a_-^2T}I_m
    \qquad\text{almost surely}.
\label{eq:exact-low-rank-row-covariance-bound}
\end{equation}
The rows of $\Gamma_T$ are conditionally independent because the coordinates
$U^1,\ldots,U^r$ are independent and independent of $V$.

For $u\in\mathbb S^{r-1}$ and $v\in\mathbb S^{m-1}$,
\eqref{eq:exact-low-rank-row-covariance-bound} gives, almost surely,
\[
    \mathbb P_{A_0}\!\left(
        |u^\top \Gamma_Tv|>t\mid\mathcal G_V
    \right)
    \leq2\exp\!\left(-\frac{a_-^2Tt^2}{2}\right).
\]
Let $\mathcal N_r$ and $\mathcal N_m$ be $1/4$-nets of
$\mathbb S^{r-1}$ and $\mathbb S^{m-1}$, respectively, with
\[
    |\mathcal N_r|\leq9^r,
    \qquad
    |\mathcal N_m|\leq9^m.
\]
The two-net inequality and a conditional union bound show, almost surely, that
\[
    \mathbb P_{A_0}\!\left(
        \|\Gamma_T\|_{\mathrm{op}}>2t\mid\mathcal G_V
    \right)
    \leq2\cdot9^d\exp\!\left(-\frac{a_-^2Tt^2}{2}\right).
\]
Take
\[
    t^2=\frac{2(d\log9+x)}{a_-^2T}.
\]
After taking expectations and using \eqref{eq:exact-low-rank-cross-representation},
this yields
\[
    \mathbb P_{A_0}\!\left(
        \|R_TQ_T^{-1}R_T^\top\|_{\mathrm{op}}
        >\frac{8(d\log9+x)}{a_-^2T}
    \right)
    \leq2\exp(-x).
\]
Since $d\log9+x\leq\log9(d+x)$, this proves
\eqref{eq:exact-low-rank-cross-bound}.
\end{proof}

\subsection{Proof of the curvature result}
\label{subsec:proof-exact-low-rank-curvature}

We now prove Theorem~\ref{thm:exact-low-rank-curvature} by combining the block estimates established in Lemmas~\ref{lem:exact-low-rank-stable-block}-\ref{lem:exact-low-rank-cross-block}.

\begin{proof}[Proof of Theorem~\ref{thm:exact-low-rank-curvature}]
Define the events
\[
    \mathcal E_S
    :=\left\{
        \|S_T-\mathbb E_{A_0}S_T\|_{\mathrm{op}}
        \leq\frac{\Delta}{6}
    \right\},
\]
\[
    \mathcal E_Q
    :=\left\{
        \lambda_{\min}(Q_T)
        \geq c_B\frac{T}{m+\ell_\delta}
    \right\},
\]
and
\[
    \mathcal E_R
    :=\left\{
        \|R_TQ_T^{-1}R_T^\top\|_{\mathrm{op}}
        \leq C_R\frac{d+\ell_\delta}{T}
    \right\}.
\]
The conditions \eqref{eq:curvature-stable-square-condition} and
\eqref{eq:curvature-stable-linear-condition} ensure that
\[
    C_S\left\{
        \sqrt{\frac{r+\ell_\delta}{T}}
        +\frac{r+\ell_\delta}{T}
    \right\}
    \leq\frac{\Delta}{6}.
\]
Therefore, Lemmas~\ref{lem:exact-low-rank-stable-block},
\ref{lem:exact-low-rank-brownian-block}, and
\ref{lem:exact-low-rank-cross-block} give
\[
    \mathbb P_{A_0}(\mathcal E_S^c)\leq2\exp(-\ell_\delta)=\frac{\delta}{3},
    \qquad
    \mathbb P_{A_0}(\mathcal E_Q^c)\leq \exp(-\ell_\delta)=\frac{\delta}{6},
    \qquad
    \mathbb P_{A_0}(\mathcal E_R^c)\leq2\exp(-\ell_\delta)=\frac{\delta}{3}.
\]
Consequently,
\begin{equation}
    \mathbb P_{A_0}(\mathcal E_S\cap\mathcal E_Q\cap\mathcal E_R)
    \geq1-\frac{5\delta}{6}
    \geq1-\delta.
\label{eq:exact-low-rank-good-event-probability}
\end{equation}
Work on the event
$\mathcal E:=\mathcal E_S\cap\mathcal E_Q\cap\mathcal E_R$. By
\eqref{eq:exact-low-rank-stable-mean-lower} and
\eqref{eq:curvature-init-condition},
\[
    \lambda_{\min}\!\left(\mathbb E_{A_0}S_T\right)
    \geq b-\frac{\Delta}{6}.
\]
Thus,
\[
    \lambda_{\min}(S_T)\geq b-\frac{\Delta}{3}.
\]
The cross-block condition \eqref{eq:curvature-cross-condition} gives
\[
    \|R_TQ_T^{-1}R_T^\top\|_{\mathrm{op}}
    \leq\frac{\Delta}{6},
\]
and hence
\begin{equation}
    \lambda_{\min}\!\left(
        S_T-R_TQ_T^{-1}R_T^\top
    \right)
    \geq b-\frac{\Delta}{2}=c_1.
\label{eq:exact-low-rank-schur-lower}
\end{equation}
Furthermore, \eqref{eq:curvature-brownian-condition} yields
\begin{equation}
    \lambda_{\min}(Q_T)\geq c_1.
\label{eq:exact-low-rank-Q-lower}
\end{equation}
On $\mathcal E$,
\[
    \|R_TQ_T^{-1}\|_{\mathrm{op}}^2
    \leq
    \|Q_T^{-1}\|_{\mathrm{op}}
    \|R_TQ_T^{-1}R_T^\top\|_{\mathrm{op}}.
\]
Using the defining bounds of $\mathcal E$ and
\eqref{eq:curvature-triangular-condition}, we obtain
\begin{equation}
    \|R_TQ_T^{-1}\|_{\mathrm{op}}\leq h.
\label{eq:exact-low-rank-triangular-bound}
\end{equation}
The Schur-complement factorization gives
\[
    P^\top C_TP
    =L_T
    \begin{pmatrix}
        S_T-R_TQ_T^{-1}R_T^\top & 0\\
        0 & Q_T
    \end{pmatrix}
    L_T^\top,
    \qquad
    L_T:=
    \begin{pmatrix}
        I_r & R_TQ_T^{-1}\\
        0 & I_m
    \end{pmatrix}.
\]
Since
\[
    \|L_T^{-1}\|_{\mathrm{op}}
    \leq1+\|R_TQ_T^{-1}\|_{\mathrm{op}},
\]
\eqref{eq:exact-low-rank-schur-lower},
\eqref{eq:exact-low-rank-Q-lower}, and
\eqref{eq:exact-low-rank-triangular-bound} imply
\[
    \lambda_{\min}(C_T)
    \geq\frac{c_1}{(1+h)^2}=c_0.
\]
Finally, $C_T+\eta I_d\succeq C_T$, which proves
\eqref{eq:exact-low-rank-curvature-probability} together with
\eqref{eq:exact-low-rank-good-event-probability}.
\end{proof}

\section{Numerical Study}
\label{sec:numerical-study}
In this section, we present two simulation experiments illustrating the empirical behaviour of the WNEE under different asymptotic regimes. The first experiment investigates the effect of increasing the dimension of the system, while the second focuses on the impact of increasing the observation horizon.
\subsection{Asymptotics in $d$}\label{subsec:Experiment_1}

The aim of this numerical experiment is to investigate the behaviour of the proposed estimator as the dimension of the system increases while the intrinsic rank of the interaction matrix remains fixed. We simulate trajectories from the Ornstein-Uhlenbeck process defined in \eqref{eq:intro-ou} with diffusion matrix $D=I_d$. The observation horizon is fixed at $T=20$ while  the true interaction matrix $A_0$ is generated as a rank-$5$ matrix satisfying Assumption~\ref{ass:spectral-structure}, with parameters $\overline{a}=5$ and $\overline{\kappa}=1$. The experiment is performed for $
d\in\{10,50,100,200,300,500\}$.
For each value of $d$, we generate $50$ independent trajectories and compute  the mean Frobenius error of the proposed estimators. In particular, we will compare the performance of four different estimators:
\begin{itemize}
    \item [1)] The classical maximum likelihood estimator (MLE), defined as the minimizer of \eqref{eq:contrast}.
    \item [2)]  The Ridge estimator, obtained by adding a Frobenius norm penalty to the minimization problem,
    \begin{equation*}
    \widehat{A}_{R}:= \underset{A\in\mathbb M_d}{\operatorname{argmin}} \left\{\mathcal L_T(A) + \eta_R \|A\|_F^2\right\}.
\end{equation*}
    \item[3)] The Nuclear estimator, based on Schatten-1 norm regularization,
    \begin{equation*}
    \widehat{A}_N := \underset{A\in\mathbb M_d}{\operatorname{argmin}} \left\{\mathcal L_T(A) + \lambda_N \|A\|_*\right\}.
\end{equation*}
    \item [4)] The proposed WNEE estimator defined in \eqref{eq:wnee}.
\end{itemize}
Since the theoretical framework assumes continuous-time observations, trajectories are generated numerically using the Euler-Maruyama scheme with fixed time step $\Delta_n=T/n = 10^{-3}$. Preliminary experiments showed that further reductions of the time step produced negligible changes in the estimation error while substantially increasing the computational cost. For each value of $d$,  and for each regularized estimator, the tuning parameters $\{ \eta_R, \lambda_N, (\eta,\lambda)\}$ are selected by cross-validation following a similar strategy to that proposed in \cite{CiolekMarushkevychPodolskij2025}.  Specifically, the observed trajectory is split into a training segment containing $70\%$ of the observations and a validation segment containing the remaining $30\%$. For each candidate value of the tuning parameters, the corresponding estimator is computed using the training segment. The optimal  parameters are then selected as those producing the estimator that minimizes  \eqref{eq:contrast} on the validation segment.

Figure~\ref{fig:Experiment1_Frobenius} reports the average Frobenius estimation error over the replications as a function of the dimension. The results reveal a clear separation between the classical MLE and the regularized estimators  as the dimension increases. For small and moderate values of $d$, the four estimators exhibit similar performance. However, as the dimension increases, the differences become substantial. The estimation error of the classical MLE grows rapidly with $d$, whereas all regularized estimators display a considerably slower increase. Among them, the proposed WNEE consistently achieves the smallest estimation error, and its advantage over both the Ridge and Nuclear estimators becomes increasingly pronounced for larger dimensions. These results suggest that combining weighted nuclear norm regularization with a Frobenius penalty provides a significant improvement in estimation accuracy in high-dimensional low-rank settings. To further illustrate the qualitative difference between the MLE and the WNEE, Figure~\ref{fig:Experiment1_Matrices} displays the true interaction matrix together with the corresponding estimates obtained by both methods for a representative realization with $d=300$. The corresponding estimation errors are displayed underneath each estimated matrix.

\begin{figure}[] 
    \centering
    \includegraphics[width=0.9\textwidth, height=9.4cm]{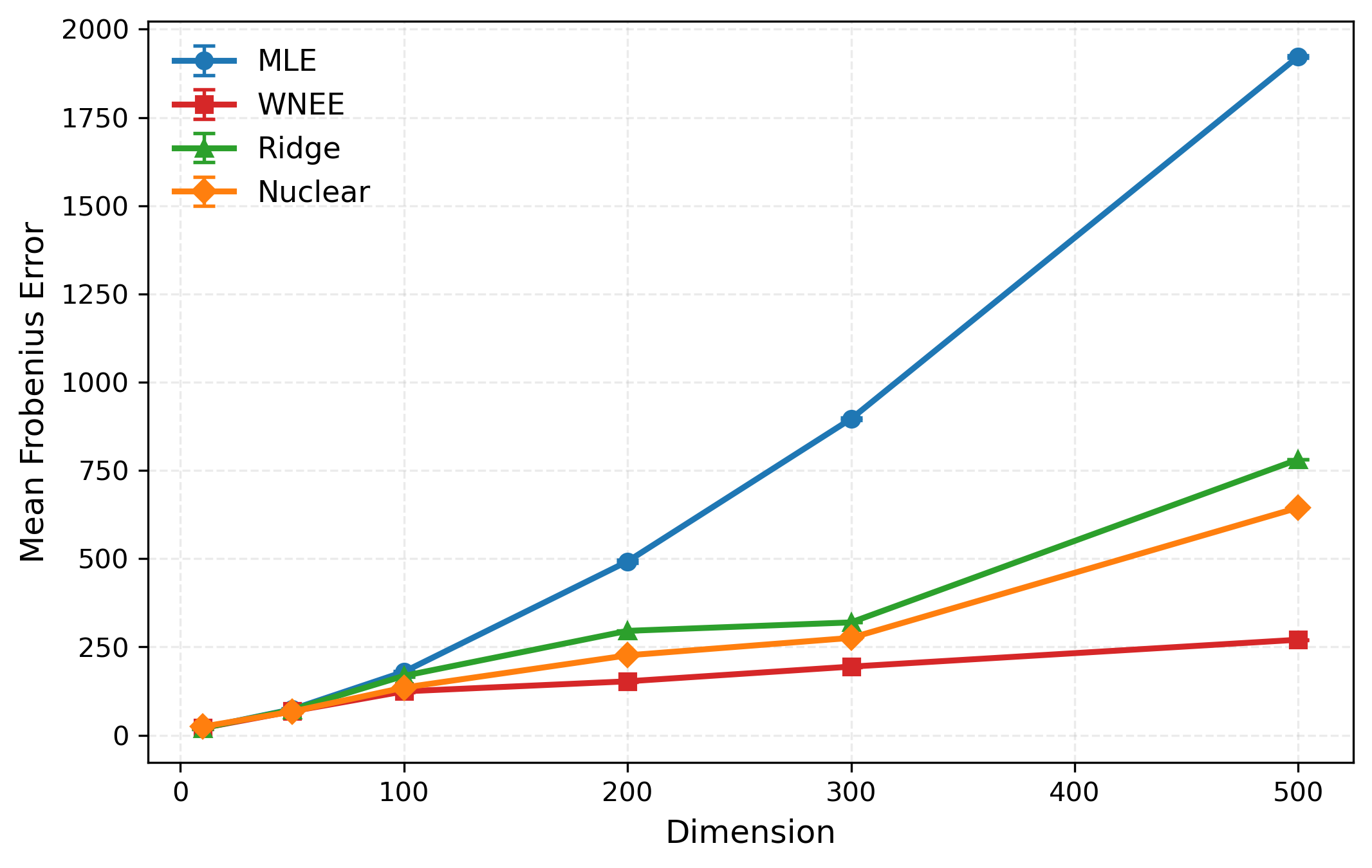} 
    \caption{Mean Frobenius estimation error (with standard deviation) over $50$ independent replications for the MLE, Ridge, Nuclear, and WNEE estimators as a function of the dimension $d$. The observation horizon is fixed at $T=20$ and the true interaction matrix has rank $r=5$.
    }
    \label{fig:Experiment1_Frobenius} 
\end{figure}

\begin{figure}[] 
    \centering
    \includegraphics[width=0.9\textwidth, height=9.4cm]{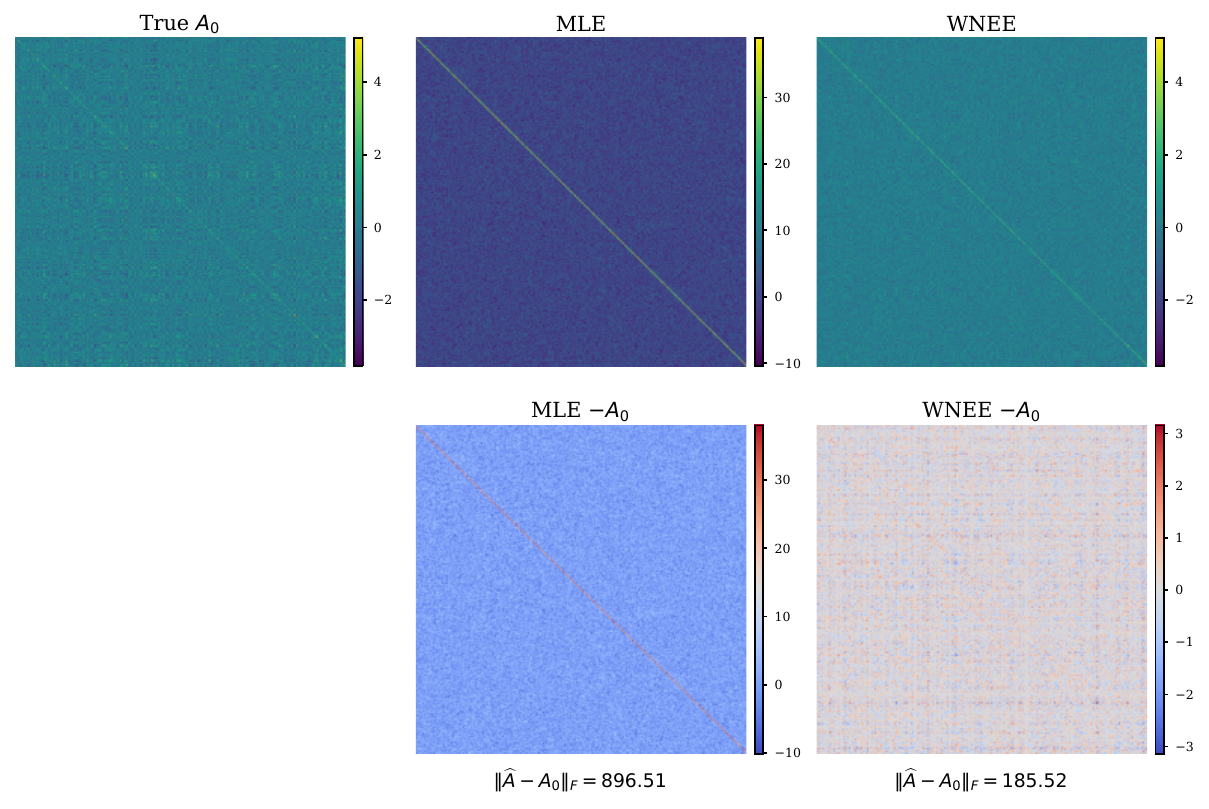} 
    \caption{Heatmaps of the true interaction matrix $A_0$ and the corresponding estimates obtained by the MLE and the proposed WNEE for a representative realization with $d=300$. The bottom row displays the estimation errors $\widehat{A}-A_0$, together with their Frobenius norms.
}
    \label{fig:Experiment1_Matrices} 
\end{figure}
\subsection{Asymptotics in $T$}

In the second numerical experiment, we investigate the behaviour of the proposed estimator as the observation horizon increases while the dimension of the system and the intrinsic rank of the interaction matrix remain fixed. Since the first experiment establishes the WNEE as the best-performing regularized estimator, we restrict the second experiment to the comparison between  the WNEE and the classical unregularized benchmark, the MLE. In this scenario, the trajectories are simulated from the OU process defined in \eqref{eq:intro-ou} under the same conditions in Experiment 1. In this case, the dimension is fixed at $d=300$, while the interaction matrix $A_0$ is generated as a rank-$5$ matrix satisfying Assumption~\ref{ass:spectral-structure} with parameters $\overline{a}=5$ and $\overline{\kappa}=1$. The observation horizon is varied over $T\in\{20,40,80,120\}.$
For each value of $T$, we generate $50$ independent trajectories and estimate $A_0$ using both the MLE and the WNEE. The remaining simulation settings, including the discretization scheme and the cross-validation procedure for selecting the regularization parameters, are identical to those of Experiment 1.

\begin{figure}[] 
    \centering
    \includegraphics[width=0.9\textwidth, height=7.2cm]{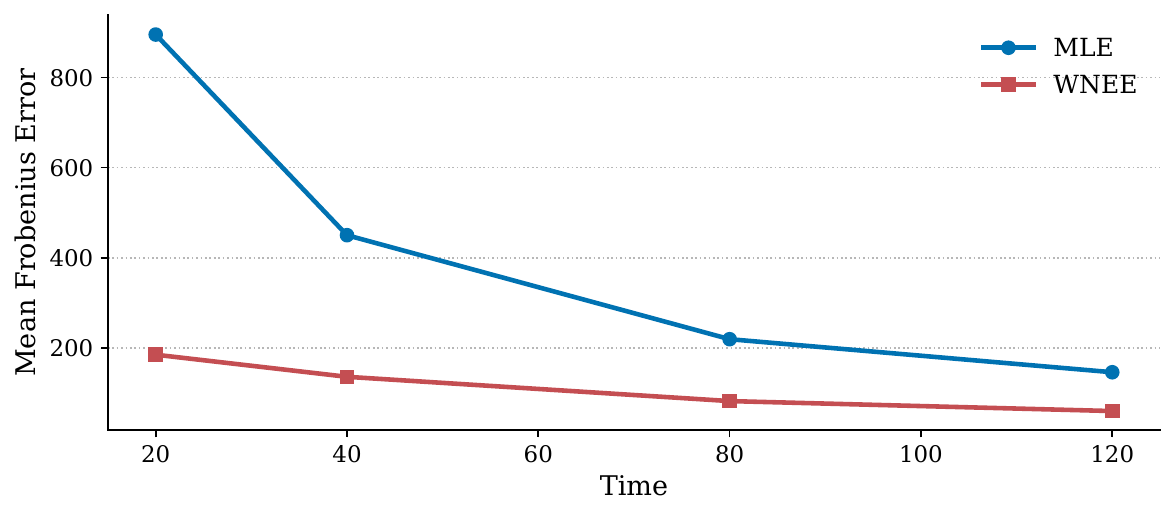} 
    \caption{Mean Frobenius estimation error (with standard deviation) over $50$ independent replications as a function of the time horizon  $T$. The dimension is fixed at $d=300$ while the true interaction matrix has rank $r=5$.
}
    \label{fig:Experiment2_Frobenius} 
\end{figure}

Figure~\ref{fig:Experiment2_Frobenius} reports the mean Frobenius  error over the  replications as a function of the observation horizon. As expected, the estimation error decreases for both estimators as more observations become available. Nevertheless, the proposed WNEE consistently achieves lower estimation errors than the classical MLE throughout the range of observation horizons considered.

\FloatBarrier
\bibliographystyle{plainnat} 
\bibliography{references}

\end{document}